\newtheorem{theorem}{Theorem}[section]
\newtheorem{conjecture}[theorem]{Conjecture}
\newtheorem{corollary}[theorem]{Corollary}
\newtheorem{definition}[theorem]{Definition}
\newtheorem{lemma}[theorem]{Lemma}
\newtheorem{proposition}[theorem]{Proposition}
\theoremstyle{remark}
\newtheorem{example}[theorem]{Example}
\newtheorem{remark}[theorem]{Remark}
\newcommand{\vanish}[1]{}\parskip=12pt
\newcommand{\0}{\widehat{0}}
\newcommand{\SF}{\mbox{SF}}
\begin{document}
\title[Strongly Bernoulli type truncation games on words]{Enumeration by kernel
  positions for strongly Bernoulli type truncation games on words} 
\author{G\'abor Hetyei}
\address{Department of Mathematics and Statistics, UNC Charlotte, 
	Charlotte, NC 28223}
\email{ghetyei@uncc.edu}
\thanks{This work was completed while the author was on reassignment of
   duties sponsored by the
   University of North Carolina at Charlotte.}  
\subjclass{Primary 05A15; Secondary 11B68, 91A05, 91A46}
\keywords{Bernoulli polynomials, progressively finite games, kernel
position, enumerative combinatorics, generating functions} 

\begin{abstract}
We find the winning strategy for a class of truncation
games played on words. As a consequence of the present author's recent
results on some of these games we obtain new formulas for Bernoulli
numbers and polynomials of the second kind and a new combinatorial model
for the number of connected permutations of given rank. For connected
permutations, the decomposition used to find the winning strategy is
shown to be bijectively equivalent to King's decomposition, used to 
recursively generate a transposition Gray code of the connected
permutations. 
\end{abstract}

\maketitle

\section*{Introduction}
In a recent paper~\cite{Hetyei-EKP} the present author introduced a
class of progressively finite games played on ranked posets, 
where each move of the winning strategy is unique and the positions
satisfy the following uniformity criterion:  each
position of a given rank may be reached from the same number of
positions of a given higher rank in a single move. As a consequence, the
kernel positions 
of a given rank may be counted by subtracting from the number of all
positions the appropriate multiples of the kernel positions of lower
ranks. The main example in~\cite{Hetyei-EKP} is the {\em original
  Bernoulli game}, a truncation game played on pairs of words of the
same length, for which the number of kernel positions of rank $n$ is
a signed factorial multiple of the Bernoulli number of the
second kind $b_n$. Similarly to this game, most examples mentioned
in~\cite{Hetyei-EKP} are also truncation games played on words, where
the partial order is defined by taking initial segments and the rank is
determined by the length of the words involved. 

In this paper we consider a class of {\em strongly Bernoulli type
  truncation games} played on words, for which
  we do not require the uniformity condition on the rank 
to be satisfied. We show that for such games, the winning 
strategy may be found by decomposing each kernel position as a
concatenation of {\em elementary kernel factors}. This decomposition is
unique. All truncation games considered in~\cite{Hetyei-EKP} (including
the ones played on pairs or triplets of words) are isomorphic to a
strongly Bernoulli type truncation game. For most of these examples,
the elementary kernel factors of a given type are also easy to
enumerate. Thus we may obtain explicit summation formulas and
  non-alternating recurrences for numbers which were expressed
  in~\cite{Hetyei-EKP} as coefficients in a generating function or by
  alternating recurrences. The explicit 
summation formulas are obtained by considering the entire unique
decomposition of each kernel position, the non-alternating recurrence
is obtained  by considering the removal of the last elementary kernel
factor only. Thus we find some new identities for the Bernoulli
polynomials and numbers of the second kind, and shed new light on
King's~\cite{King} decomposition of ``indecomposable'' permutations.   

The paper is structured as follows. After the Preliminaries, the main
unique decomposition theorem is stated in Section~\ref{s_Btg}. 
In the subsequent sections we consider games to which this result is
applicable: we show they are isomorphic to strongly Bernoulli type 
truncation games, we find formulas expressing their elementary kernel
factors of a given type, and use these formulas to express the number of
kernel positions as an explicit sum and by a non-alternating
recurrence. Most detail is given for the original Bernoulli game in
Section~\ref{s_ob2}, omitted details in other sections are replaced by
references to the appropriate part of this section. As a consequence of our
analysis of the original Bernoulli game, we obtain an explicit summation
formula of the Bernoulli numbers of the second kind, expressing them as a sum
of entries of the same sign. We also obtain a non-alternating recurrence
for their absolute values. 

In Section~\ref{s_MR} we consider a
restriction of the original Bernoulli game to a set of positions, where
the kernel positions are identifiable with the {\em connected} or {\em
  indecomposable} permutations forming an algebra basis of the
Malvenuto-Reutenauer Hopf algebra~\cite{Malvenuto-Reutenauer}. For these
the recurrence obtained by the removal of the last elementary kernel
factor is numerically identical to the recurrence that may be found in 
King's~\cite{King} recursive construction of a transposition Gray code
for the connected permutations. We show that this is not a coincidence:
there is a bijection on the set of permutations, modulo
which King's recursive step corresponds to the removal of the last
elementary kernel factor in the associated {\em place-based 
  non-inversion tables} (a variant of the usual inversion tables). Our
result inspires another systematic algorithm to list all connected
permutations of a given order, and a new combinatorial model for the
numbers of connected permutations of order $n$, in which this number 
arises as the total weight of all permutations
of order $n-2$, such that the highest weight is associated to the
permutations having the most {\em strong fixed points} (being thus
the ``least connected'').    

Section~\ref{s_pb2} contains the consequences of our main result to
Bernoulli polynomials of the second kind. Here we observe that we obtain
the coefficients of these polynomials when we expand them in the basis
$\{\binom{x+1}{n}\::\: n\geq 0\}$, and obtain a new formula for the
Bernoulli numbers of the second kind. 

Finally, in Section~\ref{s_fB} we
consider the {\em flat Bernoulli game}, whose kernel positions have the
generating function $t/((1-t)(1-\ln(1-t))$ and conclude the section with
an intriguing conjecture that for a long random initial word a novice player
could not decrease the chance of winning below $50\%$ by simply removing
the last letter in the first move. 

\section{Preliminaries}

\subsection{Progressively finite games}

A progressively finite two-player game is a  game whose positions may be
represented by the vertices of a directed graph that contains no directed
cycle nor infinite path, the edges represent valid moves. Thus
the game always ends after a finite number of moves. 
The players take alternate turns to move along a directed edge to 
a next position, until one of them reaches a {\em winning position} with
no edge going out: the player who moves into this position is declared a
winner, the next player is unable the move. 

The winning strategy for a progressively finite game may be found by
calculating the {\em Grundy number} (or Sprague-Grundy number) of each
position, the method is well-known, a sample
reference is~\cite[Chapter 11]{Tucker}. The positions with Grundy number
zero are called {\em kernel positions}.  A player has a winning strategy
exactly when he or she is allowed to start from a non-kernel position. 
All games considered in this paper are progressively finite.

\subsection{The original Bernoulli game and its generalizations}
\label{s_b2}

In~\cite{Hetyei-EKP} the present author introduced the {\em original
 Bernoulli game} as the following progressively finite two-player game. 
The positions of rank $n>0$ in the game are all pairs of words
 $(u_1\cdots u_n,v_1\cdots v_n)$ such that 
\begin{itemize}
\item[(i)] the letters $u_1,\ldots,u_n$ and $v_1,\ldots,v_n$ are
  positive integers; 
\item[(ii)] for each $i\geq 1$ we have $1\leq u_i, v_i\leq i$.
\end{itemize}
A valid move consists of replacing the pair $(u_1\cdots
u_n,v_1\cdots v_n)$ with $(u_1\cdots u_m,v_1\cdots v_m)$ for
some $m\geq 1$ satisfying $u_{m+1}\leq v_j$ for $j=m+1,\ldots, n$. 
The name of the game refers to the following fact~\cite[Theorem
  2.2]{Hetyei-EKP}. 
\begin{theorem}
\label{T_b2}
For $n\geq 1$, the number $\kappa_n$ of kernel positions of rank $n$ in the
original Bernoulli game is given by
$$
\kappa_n=(-1)^{n-1} (n+1)! b_n,
$$
where $b_n$ is the $n$-th Bernoulli number of the second kind. 
\end{theorem}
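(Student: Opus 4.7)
The plan is to derive a linear recurrence for $\kappa_n$ of the form $(n!)^2 = \sum_{m=1}^{n} c(n,m)\,\kappa_m$ by exploiting the game's structure, and then recognize its generating function as the defining series for Bernoulli numbers of the second kind. The first step is to verify the game lies in the class of ``uniform'' progressively finite games studied in \cite{Hetyei-EKP}: for $m<n$ and any rank-$m$ position $Q$, the number of rank-$n$ positions admitting a valid one-move truncation to $Q$ is independent of $Q$. This holds because a predecessor is obtained by appending $(x_{m+1},\ldots,x_n)$ and $(y_{m+1},\ldots,y_n)$ subject to the size constraints $1\le x_i,y_i\le i$ together with the validity condition $x_{m+1}\le y_j$ for $j=m+1,\ldots,n$, none of which refers to the letters of $Q$. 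Invoking the framework of \cite{Hetyei-EKP}, which guarantees uniqueness of the winning move in such games, one gets the desired recurrence with the uniformity constant $c(n,m)$, using the convention $c(n,n)=1$.

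The second step is to find a closed form for $c(n,m)$ when $m<n$. The letters $x_{m+2},\ldots,x_n$ are unconstrained, contributing a factor $n!/(m+1)!$, while for each admissible choice of $(y_{m+1},\ldots,y_n)$ the number of admissible $x_{m+1}$ equals $\min(y_{m+1},\ldots,y_n)$. Writing the minimum as $\sum_{i\ge 1}\mathbf{1}[y_j\ge i\text{ for all }j]$ and applying the hockey stick identity to the resulting binomial sum will collapse the expression to
\[
c(n,m)=\frac{n!\,(n+1)!}{m!\,(m+1)!\,(n-m+1)},
\]
a formula that also specializes correctly to $c(n,n)=1$, unifying the two cases.

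With $c(n,m)$ in hand, the third step is the generating function manipulation. Dividing the recurrence by $n!\,(n+1)!$ and setting $\gamma_m := \kappa_m/[m!\,(m+1)!]$ produces the convolutional identity $\sum_{m=1}^{n}\gamma_m/(n-m+1)=1/(n+1)$ for every $n\ge 1$, which is equivalent to the power series equation $\Gamma(t)\cdot(-\ln(1-t)) = -\ln(1-t)-t$, giving $\Gamma(t)=1+t/\ln(1-t)$. On the other hand, substituting $t\mapsto -t$ in the defining identity $t/\ln(1+t)=\sum_{n\ge 0}b_n t^n/n!$ for the Bernoulli numbers of the second kind and separating out the constant term yields exactly $1+t/\ln(1-t)=\sum_{n\ge 1}(-1)^{n-1}b_n t^n/n!$, forcing $\gamma_n = (-1)^{n-1}b_n/n!$ and hence $\kappa_n=(-1)^{n-1}(n+1)!\,b_n$. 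The main obstacle, I expect, is the combinatorial simplification of $c(n,m)$: the naive expression involves a sum of minima over several bounded variables, and recognizing that it collapses via a single hockey stick to a simple factorial ratio is the crux of the argument; once this is achieved, the generating function identification is essentially mechanical.
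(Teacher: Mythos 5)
Your proposal is correct and follows essentially the same route as the paper's source for this statement: Theorem~\ref{T_b2} is quoted from \cite[Theorem 2.2]{Hetyei-EKP}, whose proof is exactly the ``external'' count you describe --- verify the uniformity condition (iii), compute $\gamma_{m,n}=n!\,(n+1)!/\bigl(m!\,(m+1)!\,(n-m+1)\bigr)$ via the hockey-stick collapse of $\sum_u \prod_j(j-u+1)$, and identify the resulting convolution with the expansion of $1+t/\ln(1-t)$. (Note that the present paper's own new contribution to this count is the different, ``internal'' decomposition into elementary kernel factors of Theorem~\ref{T_b2k}, which your argument does not touch.)
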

Here the Bernoulli number of the second kind $b_n$ is  obtained by
substituting zero into the Bernoulli polynomial of the second kind
$b_n(x)$, given by the generating function
\begin{equation}
\label{E_b2}
\sum_{n=0}^{\infty} \frac{b_n(x)}{n!} t^n=\frac{t(1+t)^x}{\ln(1+t)},
\end{equation}
see Roman~\cite[p.\ 116]{Roman}. Note that~\cite[p. 114]{Roman}
Jordan's~\cite[p.\ 279]{Jordan} earlier definition of the Bernoulli
polynomial of the second kind $\phi_n(x)$ is obtained by dividing $b_n(x)$ 
by $n!$. 

The proof of Theorem~\ref{T_b2} depends on a few simple observations
which were generalized in~\cite{Hetyei-EKP} to a class of {\em Bernoulli
  type games on posets} (see \cite[Definition 3.1]{Hetyei-EKP}). 
The set of positions $P$ in these games is a partially ordered 
set with a unique minimum element $\0$ and a rank 
function $\rho: P\rightarrow {\mathbb N}$ such that for each $n\geq 0$
the set $P_n$ of positions of rank $n$ have finitely many elements. 
The valid moves satisfy the following criteria:
\begin{itemize}
\item[(i)] Each valid move is from a position of higher rank to a
  position of lower rank. The set of positions reachable from a single
  position is a chain. 
\item[(ii)] If $y_1$ and $y_2$ are both reachable from $x$ in a single
  move and $y_1<y_2$ then $y_1$ is reachable from $y_2$ in a single move.
\item[(iii)] For all $m<n$ there is a number $\gamma_{m,n}$ such that 
each $y$ of rank $m$ may be reached from exactly $\gamma_{m,n}$
elements of rank $n$ in a single move. 
\end{itemize}
For such games, it was shown in~\cite[Proposition
  3.3]{Hetyei-EKP}, the numbers $\kappa_n$
of kernel positions of rank $n$ satisfy the recursion formula 
\begin{equation}
\label{E_gkrec}
|P_n|=\kappa_n+\sum_{m=0}^{n-1} \kappa_{m}\cdot \gamma_{m,n}.
\end{equation} 

\section{Winning a strongly Bernoulli type truncation game}
\label{s_Btg}

Let $\Lambda$ be an alphabet and let us denote by $\Lambda^*$
the free monoid generated by $\Lambda$, i.e., set 
$$\Lambda^*:=\{v_1\cdots v_n \::\: n\geq 0, \forall i (v_i\in
\Lambda)\}.$$ 
Note that $\Lambda^*$ contains the empty word $\varepsilon$. 
\begin{definition}
Given a subset $M\subseteq \Lambda^*\setminus \{\varepsilon\}$,
we define the {\em truncation game induced by $M$} as the game whose
positions are the elements of $\Lambda^*$, and whose valid moves consist of 
all truncations $v_1\cdots v_n\rightarrow v_1\cdots v_i$ such that 
$v_{i+1}\cdots v_n\in M$. 
\end{definition}
Note that $\varepsilon\not\in M$ guarantees that the truncation game
induced by $M$ is progressively finite, we may define the {\em rank} of
each position as the length of each word. This rank decreases after each
valid move. 
\begin{definition}
Given $M\subset \Lambda^*\setminus \{\varepsilon\}$, and  $P\subseteq
  \Lambda^*$, we say that $P$ is {\em $M$-closed} if for all
  $v_1\cdots v_n\in\Lambda^*\setminus\{\varepsilon\}$, $v_1\cdots v_n\in
  P$ and $v_{i+1}\cdots v_n\in M$ imply $v_1\cdots v_i\in P$. For an
  $M$-closed $P$, the {\em restriction of the truncation
 game induced by $M$ to $P$} is the game whose positions are the
elements of $P$ and whose valid moves consist of 
all truncations $v_1\cdots v_n\rightarrow v_1\cdots v_i$ such that 
$v_{i+1}\cdots v_n\in M$ and $v_1\cdots v_n\in P$.  
We denote this game by $(P,M)$, and call it the {\em truncation game
  induced by $M$ on $P$}.
\end{definition}
Clearly the definition of being $M$-closed is equivalent to saying that
the set $P$ is closed under making valid moves.
\begin{definition}
\label{D_Btm}
We say that $M\subset \Lambda^*\setminus\{\varepsilon\}$ {\em induces a
  Bernoulli type truncation game} if for all pairs of words $\underline{u},
\underline{v}\in \Lambda^*\setminus\{\varepsilon\}$,
  $\underline{u}\underline{v}\in M$ and $\underline{v}\in M$ imply
$\underline{u}\in M$.   
If $M$ is also closed under taking nonempty initial segments, i.e., 
$v_1\cdots v_n\in M$ implies $v_1\cdots v_m\in M$ for all
  $m\in\{1,\ldots,n\}$ then we say that $M$ induces a {\em strongly
  Bernoulli type truncation game}. If $M$ induces a
  (strongly) Bernoulli type truncation game, we call also $(P,M)$ a
  (strongly) Bernoulli type truncation game for each $M$-closed
  $P\subseteq \Lambda^*$. 
\end{definition}
Every strongly Bernoulli type truncation game is also a Bernoulli type
truncation game. The converse is not true: consider for example the set
$M$ of all words of positive even length. It is easy to see that the
truncation game induced by $M$ is Bernoulli type, but it is not
strongly Bernoulli type since $M$ is not closed under taking initial segments
of odd length. 

\begin{remark}
The definition of a Bernoulli type truncation game is {\em almost} a
special case of the Bernoulli type games on posets defined in~\cite[Definition
  3.1]{Hetyei-EKP}. Each $M$-closed $P\subseteq \Lambda^*$ is partially
ordered by the relation $v_1\cdots v_m<v_1\cdots v_n$ for all $m<n$,
the unique minimum element of this poset is $\varepsilon$, and
the length function is a rank function for this partial order.
For this poset and rank function, the set of valid
moves satisfies conditions (i) and (ii) listed in Subsection~\ref{s_b2}.
Only the ``uniformity'' condition (iii) and the finiteness of $|P_n|$ do
not need to be satisfied. These conditions were used
in~\cite{Hetyei-EKP} to prove equation (\ref{E_gkrec}) and count the
kernel positions of rank $n$ ``externally''. 
In this section we will show that the kernel positions 
of a strongly Bernoulli type truncation game on words may be described 
``internally'' in a manner that will allow their enumeration when each
$|P_n|$ is finite. The question whether the results presented in this
section may be generalized to all Bernoulli type truncation games remains
open. All examples of Bernoulli games played on words
in~\cite{Hetyei-EKP} are isomorphic to strongly Bernoulli type
truncation games, we will prove this for most of them in this paper,
the remaining examples are left to the reader. Together with the results
in~\cite{Hetyei-EKP}, we thus obtain two independent ways to count the
same kernel positions in these games. Comparing the results in~\cite{Hetyei-EKP}
with the results in the present paper yields explicit formulas 
for the coefficients in the Taylor expansion of certain functions. 
\end{remark}

In the rest of the section we set $P=\Lambda^*$ 
and just find the winning strategy for the truncation game
induced by $M$.  Only the formulas counting the kernel positions will change
when we change the set $P$ in the subsequent sections, the
decomposition of the kernel positions will not. 
First we define some {\em elementary kernel
  positions} in which the second player may win after at most one move
by the first player.  
\begin{definition}
\label{D_ekp}
The word $v_1\cdots v_n \in \Lambda^*\setminus\{\varepsilon\}$ is {\em
  an elementary kernel position} if it satisfies $v_1\cdots v_n\not \in
  M$, but for all $m<n$ we have $v_1\cdots v_m\in M$.
\end{definition}
In particular, for $n=1$, $v_1$ is an elementary kernel position
if and only if $v_1\not\in M$. Our terminology is justified by the following
two lemmas. 
\begin{remark}
\label{R_ekp1}
A position $v_1$ is a winning position, if and only if it is an
elementary kernel position. Otherwise it is not a kernel position at
all. 
\end{remark}
\begin{lemma}
\label{L_ekp2}
For $n>1$, starting from an elementary kernel position $v_1\cdots v_n$,
the first player is either unable to move, or is able to move only to a
position where the second player may win in a single move.
\end{lemma}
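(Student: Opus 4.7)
The plan is to unpack the definition of a valid move and rule out the one case that would be problematic. According to the definition of the truncation game induced by $M$, a move from $v_1\cdots v_n$ truncates it to some $v_1\cdots v_i$ with $0\le i\le n-1$ and $v_{i+1}\cdots v_n\in M$. The boundary case $i=0$ would require $v_1\cdots v_n\in M$, which is explicitly excluded by the hypothesis that $v_1\cdots v_n$ is an elementary kernel position. Hence either no valid move exists at all (the first alternative in the lemma), or any valid move by the first player lands at a position of the form $v_1\cdots v_i$ with $1\le i\le n-1$.

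In this latter case, the initial-segment clause in Definition~\ref{D_ekp}, properly read as ``for all $m$ with $1\le m<n$'' (which is forced since $\varepsilon\notin M$ by the standing assumption $M\subseteq\Lambda^*\setminus\{\varepsilon\}$), yields $v_1\cdots v_i\in M$. Consequently the second player can perform the single truncation $v_1\cdots v_i\to\varepsilon$; since $\varepsilon$ admits no outgoing edges, it is a terminal (winning) position, and the second player has won. This is the entire content of the lemma. The main obstacle is essentially nonexistent: the statement is a direct unwinding of Definition~\ref{D_ekp}, and the only step that needs to be noticed is that the potential first-player move to $\varepsilon$ is precisely the one excluded by the hypothesis $v_1\cdots v_n\not\in M$, while every other landing spot $v_1\cdots v_i$ of the first player is, by the very definition of ``elementary kernel position'', an element of $M$ and therefore supports the retaliatory move to $\varepsilon$.
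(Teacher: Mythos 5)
Your proof is correct and follows essentially the same route as the paper's: the move to $\varepsilon$ is blocked by $v_1\cdots v_n\notin M$, so the first player can only land on a proper nonempty initial segment $v_1\cdots v_i$, which lies in $M$ by Definition~\ref{D_ekp} and hence lets the second player truncate to $\varepsilon$ and win. The extra remark about reading the quantifier as $1\le m<n$ is a harmless clarification already implicit in the standing assumption $M\subseteq\Lambda^*\setminus\{\varepsilon\}$.
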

\begin{proof}
There is nothing to prove if the first player is unable to
move. Otherwise, by $v_1\cdots v_n\not\in M$, the first player is unable
to move to the empty word. Thus, after his or her move, we
arrive in a $v_1\cdots v_m$ where $1\leq m\leq n-1$. 
Thus $v_1\cdots v_m\in M$ holds, the second player may now move to
the empty word right away. 
\end{proof}
Next we show that the set of kernel positions in a strongly Bernoulli type
truncation game on $\Lambda^*$ is closed under the {\em concatenation}
operation.  
\begin{proposition}
\label{P_conc}
Let $\underline{u}:=u_1\cdots u_m$ be a kernel position of length $m\geq 1$ in a strongly Bernoulli truncation game induced by $M$. Then an arbitrary position  
$\underline{v}:=v_1\cdots v_n$ of length $n\geq 1$ is a kernel position
if and only if the concatenation $\underline{u}\underline{v}$ is also a
kernel position.  
\end{proposition}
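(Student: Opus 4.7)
The plan is to prove the equivalence by classifying the valid moves from $\underline{u}\underline{v}$, matching them with moves from $\underline{v}$, and inducting on $n=|\underline{v}|$. Every valid truncation of $\underline{u}\underline{v}$ falls into exactly one of three types: (a) $\underline{u}\underline{v} \to \underline{u} v_1\cdots v_i$ with $1\leq i< n$ and $v_{i+1}\cdots v_n\in M$; (b) $\underline{u}\underline{v} \to \underline{u}$, valid iff $\underline{v}\in M$; or (c) $\underline{u}\underline{v} \to u_1\cdots u_j$ with $0\leq j<m$ and $u_{j+1}\cdots u_m v_1\cdots v_n\in M$. Notice that types (a) and (b) together are in bijection with the valid moves out of $\underline{v}$: type (a) corresponds to $\underline{v} \to v_1\cdots v_i$, and type (b) corresponds to $\underline{v} \to \varepsilon$.

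For moves of type (c), the plan is to invoke the ``strongly'' part of the hypothesis: since $M$ is closed under nonempty initial segments, the word $u_{j+1}\cdots u_m$ itself lies in $M$, so the truncation $\underline{u}\to u_1\cdots u_j$ is already a valid move in the game on $\underline{u}$ alone. Because $\underline{u}$ is a kernel position, $u_1\cdots u_j$ is not; hence no type (c) move ever lands on a kernel position. This is the critical use of the stronger hypothesis and is what fails for a merely Bernoulli type game.

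For moves of types (a) and (b), the plan is to transfer kernel status back and forth. A type (b) move targets $\underline{u}$, which is kernel by hypothesis, while the corresponding move $\underline{v}\to\varepsilon$ targets $\varepsilon$, which is terminal and therefore kernel as well. For type (a), the inductive hypothesis applied to the shorter word $v_1\cdots v_i$ (length $i<n$) gives that $\underline{u} v_1\cdots v_i$ is kernel iff $v_1\cdots v_i$ is kernel. Combining these observations, $\underline{u}\underline{v}$ admits a move to a kernel position iff $\underline{v}$ admits one, i.e., $\underline{u}\underline{v}$ is kernel iff $\underline{v}$ is kernel.

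The main obstacle I anticipate is not analytical but organizational: I must keep the ``internal'' truncations (types (a) and (b)) and the ``deep'' truncations (type (c)) cleanly separated, and treat the edge case $i=0$ (the move $\underline{v}\to\varepsilon$) by direct appeal to the kernel status of $\underline{u}$ rather than to the inductive hypothesis, which is only stated for nonempty second factors. The base case $n=1$ is then simply the special case of the inductive step in which no type (a) moves exist at all, so no separate argument is needed.
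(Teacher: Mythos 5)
Your proof is correct, but it takes a genuinely different route from the paper's. The paper argues with explicit strategies: to show that $\underline{v}$ kernel implies $\underline{u}\underline{v}$ kernel, the second player plays the winning strategy for $\underline{v}$ on the suffix until the play dips below length $m$, at which point the strong Bernoulli property converts the opponent's deep truncation into a valid first move from $\underline{u}$, and the second player switches to the winning strategy for $\underline{u}$; the converse direction is a symmetric strategy-stealing argument for the first player. You instead work locally with the recursive characterization of the kernel (a position has Grundy number zero if and only if none of its out-neighbors does) and induct on $|\underline{v}|$, classifying the out-edges of $\underline{u}\underline{v}$ and matching types (a) and (b) bijectively with the out-edges of $\underline{v}$. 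The strong Bernoulli hypothesis enters at the same spot in both arguments --- pulling a truncation that cuts into $\underline{u}$ back to a valid move of the game played on $\underline{u}$ alone --- so the essential content coincides, but your version replaces the paper's informal ``we may pretend the first player just moved from $u_1\cdots u_m$'' step with a clean edge-by-edge verification, at the cost of invoking the Grundy-number recursion explicitly. One small point in your type (c) analysis: when $j=0$ the target is $\varepsilon$, which \emph{is} a kernel position; there your chain of implications actually shows that such a move cannot be valid (a kernel position $\underline{u}$ of positive length cannot lie in $M$), rather than that the target is non-kernel. Either way no valid type (c) move reaches a kernel position, so the argument stands.
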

\begin{proof}
Assume first that $\underline{u}\underline{v}$ is a kernel
position. We instruct the second player to play the winning strategy
that exists for $\underline{v}$ as long as the
length of the word truncated from $\underline{u}\underline{v}$
 at the beginning of his or her move is greater than $m$. 
For pairs of words longer than $m$, the validity of a move is
determined without regard to the letters in the first $m$ positions. By
playing the winning strategy for $\underline{v}$ as
long as possible, the second player is able to force the first player
into a position where the first player is either unable to move, or will
be the first to move to a word of length less than $m$, say
$u_1\cdots u_k$. The validity of this move implies $u_{k+1}\cdots u_m
v_1\cdots v_i\in M$ for some $i\geq 0$. By the strong Bernoulli
property we obtain $u_{k+1}\cdots u_m\in M$ and moving from
$u_1\cdots u_m$ to $u_1\cdots u_k$ is also a valid
move. We may thus pretend that the first player just made the first move
from $u_1\cdots u_m$ and the second player may win by
following the winning strategy that exists for $\underline{u}$.

For the converse, assume that $\underline{v}$  is
not a kernel position. In this case we may instruct the first player to 
play the strategy associated to $\underline{v}$ as long as possible, forcing 
the second player into a position where he or she is either unable to
move, or ends up making a move equivalent to a first move starting from 
$\underline{u}$. Now the original first player becomes
the second player in this subsequent game, and is able to
win. Therefore, in this case the concatenation 
$\underline{u}\underline{v}$ is not a kernel position either.
\end{proof} 
Using all results in this section we obtain the following structure
theorem. 
\begin{theorem}
\label{T_sBt}
A word $\underline{v}\in \Lambda^*\setminus\{\varepsilon\}$ is a kernel
  position in a strongly Bernoulli type truncation game, if and only if it may
be obtained by the concatenation of one or several elementary kernel
positions. Such a decomposition, if it exists, is unique.  
\end{theorem}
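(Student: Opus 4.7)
The plan is to prove both directions by induction on the length of $\underline{v}$, using Proposition \ref{P_conc} as the engine and the greedy choice of the shortest ``bad'' prefix as the structural identifier of the first factor.

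First I would establish the preliminary fact that every elementary kernel position is a kernel position. If $\underline{u} = u_1\cdots u_n$ is an elementary kernel position, any valid move from it lands in $u_1\cdots u_m$ with $u_{m+1}\cdots u_n \in M$; since $u_1\cdots u_n \notin M$ we must have $m \geq 1$, and by the definition of elementary kernel position $u_1\cdots u_m \in M$, so the opponent can immediately truncate to $\varepsilon$ (itself a kernel position). Thus every move from $\underline{u}$ leads to a non-kernel position, and $\underline{u}$ is a kernel position. Combined with Proposition \ref{P_conc}, an induction on the number of factors shows that any concatenation of elementary kernel positions is a kernel position, giving the ``if'' direction.

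For the ``only if'' direction, I would take a kernel position $\underline{v} = v_1\cdots v_n$ and let $k$ be the smallest index with $v_1\cdots v_k \notin M$. Such a $k$ with $1 \leq k \leq n$ must exist, because otherwise $v_1\cdots v_n \in M$, allowing the first player to move directly to $\varepsilon$ and contradicting that $\underline{v}$ is a kernel position. By minimality of $k$, the prefix $\underline{u}_1 := v_1\cdots v_k$ is an elementary kernel position, hence a kernel position by the previous paragraph. If $k = n$, we are done. Otherwise Proposition \ref{P_conc} applied to the factorization $\underline{v} = \underline{u}_1 \cdot (v_{k+1}\cdots v_n)$ shows that $v_{k+1}\cdots v_n$ is a kernel position of strictly smaller length, and the inductive hypothesis supplies its decomposition, which concatenates with $\underline{u}_1$ to give the decomposition of $\underline{v}$.

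Uniqueness is then forced by the same observation: in any decomposition $\underline{v} = \underline{u}_1 \underline{u}_2 \cdots \underline{u}_r$, the length $\ell$ of $\underline{u}_1$ must satisfy $v_1\cdots v_\ell \notin M$ and $v_1\cdots v_j \in M$ for all $j < \ell$, so $\ell$ equals the smallest $k$ defined above and is determined by $\underline{v}$ alone. Induction on length completes the argument. The main subtle point (rather than a real obstacle) is the verification that when $\underline{v}$ is a kernel position the ``first bad prefix'' $k$ lies in $\{1,\ldots,n\}$; this is where the game-theoretic hypothesis on $\underline{v}$ is used, via the fact that moving to $\varepsilon$ would otherwise be available. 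Note that the strong Bernoulli hypothesis is not explicitly invoked in this proof, because it has already been absorbed into Proposition \ref{P_conc}; this is precisely why that proposition, rather than the mere Bernoulli property, is the load-bearing ingredient.
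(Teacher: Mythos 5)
Your proof is correct and follows essentially the same route as the paper: elementary kernel positions are kernel positions because every move from one lands on a proper prefix lying in $M$ (the paper's Remark~\ref{R_ekp1} and Lemma~\ref{L_ekp2}), concatenations are handled by repeated application of Proposition~\ref{P_conc}, and both existence and uniqueness of the decomposition come from inducting on length after identifying the first factor as the shortest prefix not in $M$ (which must exist, else the first player moves to $\varepsilon$). Your closing observation that the strong Bernoulli hypothesis enters only through Proposition~\ref{P_conc} accurately reflects the paper's organization.
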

\begin{proof}
The elementary kernel positions are kernel positions by
Remark~\ref{R_ekp1} and Lemma~\ref{L_ekp2}. 
Repeated use of Proposition~\ref{P_conc}
yields that a pair of words obtained by concatenating several elementary
kernel positions is also a kernel position. 

For the converse assume that $\underline{v}:=v_1\cdots v_n$ is a kernel
position. We prove by induction on $n$ that this
position is either an elementary kernel position or may be obtained by
concatenating several elementary kernel positions. 
Let $m$ be the least index for which $v_1\cdots v_m\not\in M$ holds, such an
$m$ exists, otherwise the first player is able to move to $\varepsilon$
and win in the first move. It follows from the definition that
the $v_1\cdots v_m$ is an elementary kernel position. If $m=n$ then we
are done, otherwise applying Proposition~\ref{P_conc} to 
$v_1\cdots v_n=(v_1\cdots v_m) \cdot (v_{m+1}\cdots v_n)$
yields that $v_{m+1}\cdots v_n$ must be a kernel
position. We may apply the induction hypothesis to $v_{m+1}\cdots v_n$. 

The uniqueness of the decomposition may also be shown by induction on
$n$. Assume that $v_1\cdots v_n$ is a kernel position
and thus arises as a concatenation of one or several elementary kernel
positions. Let $v_1\cdots v_m$ be the leftmost factor in
this concatenation. By Definition~\ref{D_ekp}, $m$ is the least index
such $v_1\cdots  v_m\not\in M$ is satisfied. This determines the
leftmost factor uniquely. Now we may apply our induction hypothesis to
$v_{m+1}\cdots v_n$.  
\end{proof}

\section{The original Bernoulli game}
\label{s_ob2}

When we want to apply Theorem~\ref{T_sBt} to the original
Bernoulli game, we encounter two minor obstacles. The first obstacle is
that the rule defining a valid move from $(u_1\cdots u_n, v_1\cdots v_n)$ makes
an exception for the letters $u_1=v_1=1$, and does not allow their
removal. The second obstacle is that the game is defined on pairs of
words. Both problems may be easily remedied by changing the alphabet to
$\Lambda={\mathbb P}\times {\mathbb P}\times {\mathbb P}={\mathbb P}^3$ where
${\mathbb P}$ is the set of positive integers. 
\begin{lemma}
\label{L_oBiso}
The original Bernoulli game is isomorphic to the
strongly Bernoulli type truncation game induced by 
$$
M=\{(p_1,u_1,v_1)\cdots (p_n,u_n,v_n)\::\: p_1\neq 1, u_1\leq v_1,
\ldots, v_n\}, 
$$
on the set of positions 
$$
P=\{(1,u_1,v_1)\cdots (n,u_n,v_n)\::\: 1\leq u_i, v_i\leq i\}\subset
({\mathbb P}^3)^*. 
$$
The isomorphism is given by sending each pair of words 
$(u_1\cdots u_n,v_1\cdots v_n)\in ({\mathbb P}^2)^*$
into the word  $(1,u_1,v_1)(2,u_2,v_2)\cdots (n,u_n,v_n)\in ({\mathbb P}^3)^*$. 
\end{lemma}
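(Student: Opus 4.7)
The plan is to verify four things in order: $(a)$ that $M$ induces a strongly Bernoulli type truncation game, $(b)$ that $P$ is $M$-closed, $(c)$ that the map $\Phi:(u_1\cdots u_n,v_1\cdots v_n)\mapsto (1,u_1,v_1)(2,u_2,v_2)\cdots(n,u_n,v_n)$ is a bijection between the positions of the original Bernoulli game and $P$, and $(d)$ that $\Phi$ sends valid moves to valid moves in both directions. Each point is a matter of unwinding Definition~\ref{D_Btm}, the description of the original Bernoulli game in Subsection~\ref{s_b2}, and the explicit presentations of $M$ and $P$ given in the statement.

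For $(a)$, I would first observe that closure of $M$ under nonempty initial segments is immediate: a nonempty initial segment of a word in $M$ keeps the same first triple (so retains $p_1\neq 1$) and imposes a subset of the inequalities $u_1\leq v_j$. The Bernoulli property is similar: if $\underline{u}\underline{v}\in M$, the leading triple of $\underline{u}$ already has first coordinate $\neq 1$, and the inequalities $u_1\leq v_j$ needed inside $\underline{u}$ are part of those witnessed by $\underline{u}\underline{v}\in M$; notice that the hypothesis $\underline{v}\in M$ is not even needed.

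For $(b)$, suppose $(1,u_1,v_1)\cdots(n,u_n,v_n)\in P$ and that its suffix $(m+1,u_{m+1},v_{m+1})\cdots(n,u_n,v_n)$ lies in $M$; the leading coordinate $m+1$ of that suffix must satisfy $m+1\neq 1$, which forces $m\geq 1$, and the truncated prefix inherits both the first-coordinate pattern $1,2,\ldots,m$ and the inequalities $1\leq u_i,v_i\leq i$, so it still lies in $P$. For $(c)$, $\Phi$ is injective because the pairs can be read off the triples, and surjective onto $P$ since the elements of $P$ are exactly those words whose $i$-th first coordinate is $i$ and whose remaining coordinates obey $1\leq u_i,v_i\leq i$.

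The heart of the argument is $(d)$. A move in the original Bernoulli game from $(u_1\cdots u_n,v_1\cdots v_n)$ to $(u_1\cdots u_m,v_1\cdots v_m)$ is valid precisely when $m\geq 1$ and $u_{m+1}\leq v_j$ for $j=m+1,\ldots,n$. Under $\Phi$, this is the truncation removing the suffix $(m+1,u_{m+1},v_{m+1})\cdots(n,u_n,v_n)$; this suffix lies in $M$ exactly when $m+1\neq 1$ and $u_{m+1}\leq v_j$ for $j=m+1,\ldots,n$, which is the same pair of conditions. I do not anticipate any real obstacle here; the only subtlety worth flagging is that the original rule's exceptional treatment of the leading letter pair $(u_1,v_1)$ (which may not be removed) is encoded cleanly by the condition $p_1\neq 1$ on the first triple of any word in $M$, and this is exactly why enlarging the alphabet to ${\mathbb P}^3$ is necessary.
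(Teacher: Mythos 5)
Your verification is correct and complete; the paper states Lemma~\ref{L_oBiso} without proof, treating exactly this routine unwinding of the definitions as self-evident, and your points $(a)$--$(d)$ supply precisely the checks intended (including the key observation that the condition $p_1\neq 1$ encodes the prohibition on removing the first letter pair, and that closure under initial segments already yields the Bernoulli property without using $\underline{v}\in M$). Nothing is missing.
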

Theorem~\ref{T_sBt} provides a new way of counting the kernel positions
of rank $n$ in the game $(P,M)$ defined in Lemma~\ref{L_oBiso}. 
Each kernel position $(1,u_1,v_1)\cdots (n,u_n,v_n)$ may be
uniquely written as a concatenation of elementary kernel positions. 
Note that these elementary kernel positions do not need to belong to the
set of valid positions $P$. However, we are able to
independently describe and count all elementary kernel positions 
that may appear in a concatenation factorization of a valid kernel position
$(1,u_1,v_1)\cdots (n,u_n,v_n)$ and contribute the segment $(i,u_i,v_i)\cdots
(j,u_j,v_j)$ to it.  
We call such a pair an {\em elementary kernel factor of type $(i,j)$}
and denote the number of such factors by $\kappa(i,j)$. Note that for
$i=1$ we must have $j=1$ and $(1,1,1)$ is the only elementary kernel
factor of type $(1,1)$. Thus we have $\kappa(1,1)=1$. 
\begin{lemma}
\label{L_ekf}
For $2\leq i\leq j$, a word $(i,u_i,v_i)\cdots (j,u_j,v_j)\in ({\mathbb
  P}^3)^*$ is an elementary kernel factor of type $(i,j)$ if and only if
it satisfies the following criteria: 
\begin{itemize}
\item[(i)] for each $k\in \{i,i+1,\ldots,j\}$ we have $1\leq u_k, v_k\leq k$;
\item[(ii)] we have $u_i>v_j$;
\item[(iii)] for all $k\in \{i,i+1,\ldots,j-1\}$ we have $u_i\leq v_k$.
\end{itemize}
\end{lemma}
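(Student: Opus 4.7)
The plan is to unravel Definition~\ref{D_ekp} applied to the word $w = (i, u_i, v_i)(i+1, u_{i+1}, v_{i+1})\cdots(j, u_j, v_j)$, using the explicit description of $M$ given in Lemma~\ref{L_oBiso}. The characterization is purely a translation of that definition.

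First I would observe that condition (i) is exactly the requirement that each letter $(k,u_k,v_k)$ be admissible as the $k$-th letter of a word in $P$. Thus (i) is equivalent to saying that $w$ can occupy positions $i$ through $j$ of some word in $P$, which is precisely what the qualifier ``of type $(i,j)$'' encodes. So (i) must hold if and only if $w$ arises as such a factor, independently of the elementary-kernel-position conditions.

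Next, I would exploit the hypothesis $i \geq 2$: the first coordinate of every nonempty prefix of $w$ is $i \neq 1$, so the ``$p_1 \neq 1$'' clause in the definition of $M$ is automatically fulfilled by $w$ and by each of its nonempty prefixes. Hence, for each $m \in \{1,\ldots,j-i\}$, the prefix $(i,u_i,v_i)\cdots(i+m-1,u_{i+m-1},v_{i+m-1})$ lies in $M$ iff $u_i \leq v_k$ for all $k \in \{i,\ldots,i+m-1\}$, while $w$ itself lies in $M$ iff $u_i \leq v_k$ for all $k \in \{i,\ldots,j\}$. Now applying Definition~\ref{D_ekp} I would demand that every proper nonempty prefix lie in $M$; by taking the longest such prefix this reduces to $u_i \leq v_k$ for all $k \in \{i,\ldots,j-1\}$, which is (iii). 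Combined with $w \notin M$, condition (iii) forces $u_i > v_j$, which is (ii). The converse direction is obtained by running these same equivalences backwards.

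I do not anticipate any real obstacle beyond bookkeeping. The one subtlety worth flagging is the monotonicity of the prefix conditions: shorter prefixes impose weaker constraints on $u_i$, so among all the prefix requirements only the longest one is binding, which is why a single inequality~(iii) replaces a whole family of inequalities. Once this is noted, the equivalence of (i)--(iii) with $w$ being an elementary kernel factor of type $(i,j)$ is immediate.
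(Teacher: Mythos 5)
Your proof is correct and follows essentially the same route as the paper, which justifies Lemma~\ref{L_ekf} with a one-sentence remark that (i) encodes the positional constraints from $P$ while (ii) and (iii) are the shifted translation of Definition~\ref{D_ekp} via the description of $M$ in Lemma~\ref{L_oBiso}. Your observation that only the longest proper prefix condition is binding (so the family of prefix requirements collapses to the single inequality (iii)) is the same bookkeeping the paper leaves implicit.
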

In fact, condition (i) states the requirement for a valid position for
the letters at the positions $i,\ldots, j$, whereas conditions (ii) and
(iii) reiterate the appropriately shifted variant of the definition of an
elementary kernel position. A word $(1,u_1, v_1)\cdots
(n,u_n,v_n)$ that arises by concatenating $(1,u_1,v_1)\cdots (i_1,u_{i_1},
v_{i_1})$, $(i_1+1,u_{i_1+1},v_{i_1+1})\cdots (i_2,u_{i_2},v_{i_2})$,
and so on, $(i_{k+1,}u_{i_k+1},v_{i_k+1})\cdots (n,u_n,v_n)$ belongs to $P$
if and only if each factor $(i_s+1,u_{i_s+1},v_{i_s+1})\cdots
(i_{s+1},u_{i_{s+1}},v_{i_{s+1}})$ 
(where $0\leq s\leq k$, $i_0=0$ and $i_{k+1}=n$) satisfies conditions
(i) and (ii) in Lemma~\ref{L_ekf} with $i=i_s+1$ and $j=i_{s+1}$. We
obtain the unique factorization as a concatenation of elementary kernel
positions if and only if each factor $(u_{i_s+1},v_{i_s+1})\cdots
(u_{i_{s+1}},v_{i_{s+1}})$ also satisfies condition 
(iii) in Lemma~\ref{L_ekf} with $i=i_s+1$ and $j=i_{s+1}$. Using the
description given in Lemma~\ref{L_ekf} it is easy to calculate the
numbers $\kappa(i,j)$.
\begin{lemma}
\label{L_ekfc}
For $2\leq i\leq j$, the number of elementary kernel factors of type $(i,j)$ is
$$
\kappa(i,j)=(j-i)!^2\binom{j}{i}\binom{j}{i-2}.
$$
\end{lemma}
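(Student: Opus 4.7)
The plan is to work directly from the characterization in Lemma~\ref{L_ekf}, condition on the value of $u_i$, and then collapse the resulting sum by one binomial identity.

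First, I would fix $a = u_i$. Condition~(iii) of Lemma~\ref{L_ekf} applied at $k = i$ gives $u_i \le v_i \le i$, while condition~(ii) forces $u_i > v_j \ge 1$, so $a$ ranges over $\{2, 3, \ldots, i\}$. With $a$ fixed, the remaining letters are independently constrained: $v_j \in \{1, \ldots, a-1\}$ contributes $a-1$ choices; each $v_k$ with $i \le k \le j-1$ lies in $\{a, a+1, \ldots, k\}$, contributing $k - a + 1$ choices; and each $u_k$ with $i + 1 \le k \le j$ lies in $\{1, \ldots, k\}$, contributing $k$ choices. The two resulting products telescope, yielding
\[
\kappa(i,j) \;=\; \sum_{a=2}^{i} (a-1) \cdot \frac{(j-a)!}{(i-a)!} \cdot \frac{j!}{i!} \;=\; \frac{j!\,(j-i)!}{i!} \sum_{a=2}^{i} (a-1) \binom{j-a}{j-i},
\]
after pulling $(j-i)!$ out of $(j-a)!/(i-a)!$ via $(j-a)!/(i-a)! = (j-i)! \binom{j-a}{j-i}$.

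The second step is to prove the identity
\[
\sum_{a=2}^{i} (a-1) \binom{j-a}{j-i} \;=\; \binom{j}{i-2}
\]
by a bijective argument. Both sides count subsets $S \subseteq \{1, \ldots, j\}$ of size $j - i + 2$: the right side via the symmetry $\binom{j}{i-2} = \binom{j}{j-i+2}$, and the left side by stratifying such subsets according to their second smallest element $a$. Once $a$ is fixed, the smallest element ranges freely over $\{1, \ldots, a-1\}$ (giving $a-1$ choices) and the remaining $j - i$ elements are chosen from $\{a+1, \ldots, j\}$ (giving $\binom{j-a}{j-i}$ choices); the admissible range $2 \le a \le i$ is precisely what keeps both factors nonzero. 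Substituting this identity into the previous display and using $\binom{j}{i} = \frac{j!}{i!\,(j-i)!}$ gives the claimed formula $(j-i)!^2 \binom{j}{i} \binom{j}{i-2}$.

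No step is genuinely difficult: the argument consists of one direct enumeration, one binomial identity with a one-line combinatorial proof, and a routine repackaging of factorials. The only thing to watch is that the constraint $u_i \le v_i$ coming from condition~(iii) at $k = i$ is not overlooked, and that the sum over $a$ starts at $a = 2$ rather than $a = 1$.
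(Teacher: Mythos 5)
Your proof is correct and takes essentially the same route as the paper's: condition on the value $u=u_i\in\{2,\dots,i\}$, observe that the letters $u_{i+1},\dots,u_j$ are free while $v_i,\dots,v_{j-1}$ and $v_j$ contribute $(u-1)\,(j-u)!/(i-u)!$ choices, and then collapse $\sum_{u=2}^i (u-1)\,j!(j-u)!/(i!(i-u)!)$. The only cosmetic difference is in the last step: the paper evaluates the sum via a Vandermonde-type identity for multiset coefficients, whereas you prove $\sum_{a=2}^{i}(a-1)\binom{j-a}{j-i}=\binom{j}{i-2}$ directly by classifying the $(j-i+2)$-element subsets of $\{1,\dots,j\}$ by their second smallest element.
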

\begin{proof}
There is no other restriction on $v_{i+1},\ldots,v_{j}$ than the
inequality given in condition (i) of Lemma~\ref{L_ekf}. These numbers
may be chosen in $(i+1)(i+2)\cdots j=j!/i!$ ways. Let us denote
the value of $u_i$ by $u$, this must satisfy $1\leq u\leq i$. However,
$v_j<u_i$ may only be satisfied if $u$ is at least $2$. In that case 
$v_j$ may be selected in $(u-1)$ ways, and each $v_k$ (where $i\leq
k\leq j-1$ may be selected in $(k+1-u)$ ways (since $u_i\leq v_k\leq
k$). Thus the values of $v_i,\ldots v_j$ may be selected in 
$(u-1)\cdot (i+1-u)(i+2-u)\cdots (j-u)=(u-1)\cdot (j-u)!/(i-u)!$
ways. We obtain the formula
$$
\kappa(i,j)=\sum_{u=2}^{i} (u-1)\cdot
\frac{j!(j-u)!}{i!(i-u)!}
=
(j-i)!^2\binom{j}{i}
\sum_{u=2}^{i} \binom{u-1}{u-2}\cdot \binom{j-u}{i-u}.
$$     
Replacing the binomial coefficients with symbols
$$
\left(\binom{n}{k}\right):=\binom{n+k-1}{k},
$$
counting the $k$-element multisets on an
$n$-element set, we may rewrite the last sum 
as 
$$
\sum_{u=2}^{i} \left(\binom{2}{u-2}\right)\cdot
\left(\binom{j-i+1}{i-u}\right)=
\left(
\binom{j-i+3}{i-2}
\right).
$$
Thus we obtain 
$$
\kappa(i,j)=(j-i)!^2\binom{j}{i}
\left(
\binom{j-i+3}{i-2}
\right),
$$
which is obviously equivalent to the stated equation. 
\end{proof} 
Once we have selected the length of the elementary kernel factors in the
unique decomposition of a kernel position, we may select each kernel
factor of a given type independently. Thus we obtain the following
result.
\begin{theorem}
\label{T_b2k}
For $n\geq 1$, the number $\kappa_n$ of kernel positions of rank $n$ in
the original Bernoulli game is given by 
$$
\kappa_n=\sum_{k=0}^{n-2}\sum_{1=i_0<i_1<\cdots <i_{k+1}= n}
\prod_{j=0}^k (i_{j+1}-i_j-1)!^2
\binom{i_{j+1}}{i_j+1}\binom{i_{j+1}}{i_j-1}.  
$$ 
\end{theorem}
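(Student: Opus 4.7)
The plan is to apply the unique decomposition theorem (Theorem~\ref{T_sBt}) to the isomorphic game $(P, M)$ of Lemma~\ref{L_oBiso} and then count using Lemma~\ref{L_ekfc}. First, I would verify that $M$ induces a \emph{strongly} Bernoulli type truncation game: since both the constraint $p_1 \neq 1$ and the inequalities $u_1 \leq v_j$ defining $M$ are inherited when one truncates to any nonempty initial segment, the set $M$ is closed under taking nonempty initial segments, which is what is needed.

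By Theorem~\ref{T_sBt}, every kernel position of rank $n$ in $(P, M)$ has a unique decomposition as a concatenation of elementary kernel factors. The next step is to pin down the leftmost factor. Every valid position in $P$ starts with the letter $(1, 1, 1)$, and any single letter with first coordinate equal to $1$ lies outside $M$; hence the leftmost elementary kernel factor must have length one, and therefore must be the singleton $(1, 1, 1)$, which contributes the trivial factor $\kappa(1, 1) = 1$. The remaining suffix $(2, u_2, v_2) \cdots (n, u_n, v_n)$ then decomposes uniquely into elementary kernel factors whose consecutive position-ranges are $(i_0+1, i_1), (i_1+1, i_2), \ldots, (i_k+1, i_{k+1})$ for some increasing sequence $1 = i_0 < i_1 < \cdots < i_{k+1} = n$; here the index $k$ ranges over $\{0, 1, \ldots, n-2\}$ because at most $n-1$ factors (each of length at least one) fit into the $n-1$ positions following the initial $(1,1,1)$.

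To finish, I would observe that for a fixed sequence $(i_0, i_1, \ldots, i_{k+1})$, an elementary kernel factor of each prescribed type $(i_j+1, i_{j+1})$ may be chosen independently of the others: conditions (i)--(iii) of Lemma~\ref{L_ekf} constrain each factor internally, and the valid-position requirement $1 \leq u_i, v_i \leq i$ for $P$ is positional, so it is respected by concatenation. Thus the number of kernel positions with that factorization pattern is $\prod_{j=0}^{k} \kappa(i_j+1, i_{j+1})$. Summing over all admissible $k$ and all admissible sequences, and substituting the closed formula of Lemma~\ref{L_ekfc} for each $\kappa(i_j+1, i_{j+1})$, yields the stated identity.

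The main step that requires genuine verification, as opposed to bookkeeping, is the independence claim in the previous paragraph: that an arbitrary splicing of elementary kernel factors of the prescribed types simultaneously lies in $P$ and has that splicing as its unique elementary-kernel decomposition. Both halves are expected to be straightforward — $P$-membership is preserved because the defining inequalities $1 \leq u_i, v_i \leq i$ involve only the letter at position $i$, and uniqueness follows directly from the uniqueness clause in Theorem~\ref{T_sBt} — so I do not anticipate any serious technical difficulty beyond carefully matching the indexing of the product to the types of the factors produced by Lemma~\ref{L_ekfc}.
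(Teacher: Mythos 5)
Your proposal is correct and follows essentially the same route as the paper: invoke the isomorphism of Lemma~\ref{L_oBiso}, apply the unique factorization of Theorem~\ref{T_sBt} with the forced initial factor $(1,1,1)$ contributing $\kappa(1,1)=1$, check that the factors of prescribed types may be chosen independently (the paper does this in the discussion following Lemma~\ref{L_ekf}), and substitute the counts from Lemma~\ref{L_ekfc}. The only difference is that you spell out the verification steps the paper leaves implicit, which is fine.
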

\begin{proof}
Consider the isomorphic game $(P,M)$ given in Lemma~\ref{L_oBiso}.
Assuming that the elementary kernel factors cover the positions $1$
through $1$, $2=i_0+1$ through $i_1$, $i_1+1$ through $i_2$, and so on, $i_k+1$
through $i_{k+1}=n$, we obtain the formula
$$
\kappa_n=\kappa(1,1)\sum_{k=0}^{n-1}\sum_{1=i_0<i_1<\cdots <i_{k+1}= n}
\prod_{j=0}^k \kappa(i_j+1,i_{j+1}), 
$$ 
from which the statement follows by $\kappa(1,1)=1$ and Lemma~\ref{L_ekfc}.
\end{proof}
Comparing Theorem~\ref{T_b2k} with Theorem~\ref{T_b2} we obtain the
following formula for the Bernoulli numbers of the second kind. 
\begin{corollary}
\label{C_b2} 
For $n\geq 2$ the Bernoulli numbers of the second kind are
given by 
\begin{equation}
\label{E_b2e}
b_n=(-1)^{n-1} \frac{1}{(n+1)!} 
\sum_{k=0}^{n-2}\sum_{1=i_0<i_1<\cdots <i_{k+1}= n}
\prod_{j=0}^k (i_{j+1}-i_j-1)!^2
\binom{i_{j+1}}{i_j+1}\binom{i_{j+1}}{i_j-1}.  
\end{equation}
\end{corollary}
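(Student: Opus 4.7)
The plan is to observe that the Corollary is an immediate consequence of combining the two independent enumerations of $\kappa_n$ that have just been established, namely Theorem~\ref{T_b2} and Theorem~\ref{T_b2k}.

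First I would note that Theorem~\ref{T_b2} gives the closed-form identification
\[
\kappa_n = (-1)^{n-1} (n+1)!\, b_n
\qquad (n\geq 1),
\]
which was proved in \cite{Hetyei-EKP} via the external recursion~(\ref{E_gkrec}) applied to the uniformity data of the original Bernoulli game. Since $(n+1)!\neq 0$, this may be solved for $b_n$ to yield
\[
b_n = \frac{(-1)^{n-1}}{(n+1)!}\,\kappa_n.
\]

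Next I would substitute into this the internal enumeration of $\kappa_n$ furnished by Theorem~\ref{T_b2k}, which was obtained from the unique concatenation decomposition of a kernel position into elementary kernel factors (Theorem~\ref{T_sBt}, applied to the isomorphic presentation from Lemma~\ref{L_oBiso}) together with the factor count $\kappa(i,j)=(j-i)!^2\binom{j}{i}\binom{j}{i-2}$ from Lemma~\ref{L_ekfc}. The restriction $n\geq 2$ ensures that the outer sum over $k$ starts at a nonnegative integer and that at least one nontrivial elementary kernel factor beyond $(1,1,1)$ appears, so that the formula in Theorem~\ref{T_b2k} is the one to be used without ambiguity. Plugging the resulting sum directly into the preceding display yields precisely equation~(\ref{E_b2e}).

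There is essentially no obstacle here: the Corollary is a purely formal consequence of equating two expressions for the same integer $\kappa_n$, one analytic (via the generating function~(\ref{E_b2}) for $b_n(x)$) and one combinatorial (via the decomposition theorem). The only thing to check is that both theorems cover the range $n\geq 2$ claimed in the Corollary, which they do.
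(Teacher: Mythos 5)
Your proposal is correct and matches the paper's proof exactly: the corollary is obtained by solving $\kappa_n=(-1)^{n-1}(n+1)!\,b_n$ from Theorem~\ref{T_b2} for $b_n$ and substituting the explicit sum for $\kappa_n$ from Theorem~\ref{T_b2k}. The sign manipulation $(-1)^{-(n-1)}=(-1)^{n-1}$ and the range check $n\geq 2$ are handled correctly.
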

\begin{example}
For $n=4$, Equation (\ref{E_b2e}) yields
\begin{align*}
b_4=\frac{-1}{5!}&\left((3-1)!^2\binom{4}{2}\binom{4}{0}
+(1-1)!^2\binom{2}{2}\binom{2}{0}(3-2)!^2\binom{4}{3}\binom{4}{1}\right.\\
&+(2-1)!^2\binom{3}{2}\binom{3}{0}(3-3)!^2\binom{4}{4}\binom{4}{2}\\
&\left.+(1-1)!^2\binom{2}{2}\binom{2}{0}
(2-2)!^2\binom{3}{3}\binom{3}{1}
(3-3)!^2\binom{4}{4}\binom{4}{2}\right)=-\frac{19}{30}.
\end{align*}
Thus $b_4/4!=-19/720$, which agrees with the number tabulated by
Jordan~\cite[p.\ 266]{Jordan}.
\end{example}
As $n$ increases, the number of terms in (\ref{E_b2e}) increases
exponentially. However, we are unaware of any other explicit formula
expressing the Bernoulli numbers of the second kind as a sum of terms of
the same sign. 

Lemma~\ref{L_ekfc} may also be used to obtain a recursion formula for
the number of kernel positions of rank $n$ in the original Bernoulli
game.
\begin{proposition}
\label{P_b2krec}
For $n\geq 2$, the number $\kappa_n$ of kernel positions of rank $n$ in
the original Bernoulli game satisfies the recursion formula 
$$
\kappa_n=\sum_{i=1}^{n-1} \kappa_i (n-i-1)!^2 \binom{n}{i+1}\binom{n}{i-1}.
$$
\end{proposition}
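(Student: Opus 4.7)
The plan is to apply Theorem~\ref{T_sBt} to the isomorphic presentation $(P,M)$ of the original Bernoulli game furnished by Lemma~\ref{L_oBiso}, but instead of summing over all decompositions (as in Theorem~\ref{T_b2k}) only peel off the \emph{rightmost} elementary kernel factor. This converts the nested sum of Theorem~\ref{T_b2k} into a one-variable recurrence.

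More precisely, I would argue as follows. Let $\underline{w}=(1,u_1,v_1)(2,u_2,v_2)\cdots(n,u_n,v_n)$ be a kernel position of rank $n$ in $(P,M)$. By Theorem~\ref{T_sBt}, $\underline{w}$ has a unique decomposition into elementary kernel factors; let the last one cover the positions $i+1,\ldots,n$ for some $i\in\{1,\ldots,n-1\}$. (We cannot have $i=0$, because $u_1=v_1=1$ forces the initial factor to be $(1,1,1)$ of type $(1,1)$, so for $n\geq 2$ at least one more factor follows.) The prefix $(1,u_1,v_1)\cdots(i,u_i,v_i)$ is then itself a concatenation of elementary kernel factors and hence, again by Theorem~\ref{T_sBt}, a kernel position of rank $i$ in $(P,M)$; conversely, any kernel position of rank $i$ in $(P,M)$ may serve as such a prefix. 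The suffix $(i+1,u_{i+1},v_{i+1})\cdots(n,u_n,v_n)$ is an elementary kernel factor of type $(i+1,n)$ in the sense preceding Lemma~\ref{L_ekf}.

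The key observation is that the choice of prefix and the choice of suffix are independent: the elementary kernel factor criteria (i)--(iii) of Lemma~\ref{L_ekf} for the suffix only involve the letters at positions $i+1,\ldots,n$, and the validity of the prefix as a kernel position only involves the letters at positions $1,\ldots,i$. Hence the number of kernel positions of rank $n$ whose last elementary kernel factor has type $(i+1,n)$ equals $\kappa_i\cdot\kappa(i+1,n)$. Summing over $i$ and using Lemma~\ref{L_ekfc} with the substitution $i\mapsto i+1$, $j\mapsto n$, which gives
$$\kappa(i+1,n)=(n-i-1)!^2\binom{n}{i+1}\binom{n}{i-1},$$
yields the claimed recursion.

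The proof is essentially an exercise in rereading Theorem~\ref{T_sBt} plus the explicit enumeration of Lemma~\ref{L_ekfc}; there is no genuine obstacle. The only point deserving care is the boundary value $i=1$: here the prefix is the one-letter kernel position $(1,1,1)$, and indeed $\kappa_1=1$, so the term $\kappa_1\cdot(n-2)!^2\binom{n}{2}\binom{n}{0}$ in the sum correctly accounts for kernel positions whose rightmost elementary factor spans positions $2,\ldots,n$. Everything else is bookkeeping.
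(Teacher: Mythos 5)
Your proposal is correct and follows essentially the same route as the paper: peel off the last elementary kernel factor in the isomorphic game $(P,M)$ of Lemma~\ref{L_oBiso}, observe that the prefix is an arbitrary kernel position of rank $i$ and the suffix an arbitrary elementary kernel factor of type $(i+1,n)$ chosen independently, and then substitute the count from Lemma~\ref{L_ekfc}. Your explicit remark that $i=0$ cannot occur (because the initial letter forces a factor of type $(1,1)$) is a small point of added care over the paper's wording, but the argument is the same.
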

\begin{proof}
Consider again the isomorphic game $(P,M)$ given in Lemma~\ref{L_oBiso}.
Assume the last elementary kernel factor 
is $(i+1,u_{i+1}, v_{i+1})\cdots (n,u_n,v_n)$ where $i\geq 1$. Removing it we obtain a kernel
position of rank $i$. Conversely, concatenating an elementary  kernel
factor $(i+1,u_{i+1}, v_{i+1})\cdots (n,u_n,v_n)$ to a kernel position of
rank $i$ yields a kernel position of rank $n$. Thus we have 
\begin{equation}
\kappa_n=\sum_{i=0}^{n-1} \kappa_i \cdot \kappa(i+1,n),
\end{equation}
and the statement follows by Lemma~\ref{L_ekfc}.
\end{proof}
Comparing Proposition~\ref{P_b2krec} with Theorem~\ref{T_b2} we obtain the
following recursion formula for absolute values of the Bernoulli
numbers of the second kind. 
\begin{equation}
\label{E_b2rec}
|b_n|=\frac{1}{n+1}\sum_{i=1}^{n-1} |b_i| (n-i-1)!
 \binom{n}{i-1}\quad\mbox{holds for $n\geq 2$}.
\end{equation}
Equivalently, Jordan's~\cite{Jordan} Bernoulli numbers of the second
kind $b_n/n!$ satisfy 
\begin{equation}
\label{E_b2Jrec}
\left|\frac{b_n}{n!}\right|=\sum_{i=1}^{n-1} \left|\frac{b_i}{i!}\right| 
\frac{i}{(n+1)(n-i+1)(n-i)} \quad\mbox{for $n\geq 2$}.
\end{equation}
\begin{remark}
Since the sign of $b_n$ for $n\geq 1$ is $(-1)^{n-1}$, and
  substituting $x=0$ in (\ref{E_b2}) gives 
$$
\sum_{n\geq 0} \frac{b_n}{n!} t^n=\frac{t}{\ln(1-t)},
$$
it is easy to verify that (\ref{E_b2Jrec}) could also be derived from
the following equation, satisfied by the generating function of the
numbers $b_n$:  
$$
\frac{d}{dt}\left(t\cdot \frac{t}{\ln(1-t)}\right)+1-t
=
\frac{d}{dt}\left(\frac{t}{\ln(1-t)}\right)\cdot ((1-t)\ln(1-t)+t).
$$
However, it seems hard to guess that this equation will yield a nice
recursion formula.
\end{remark}

\section{Decomposing the indecomposable permutations}
\label{s_MR}

\begin{definition}
The {\em instant Bernoulli game} is the restriction of the original
Bernoulli game to the set of positions $\{ (12\cdots n,v_1\cdots
v_n)\::\: n\geq 1\}$.
\end{definition}
\begin{lemma}
\label{L_MRsimple}
Equivalently, we may define the set of positions of the
instant Bernoulli game as the set of words $v_1\cdots v_n$
satisfying $n\geq 1$ and $1\leq v_i\leq i$ for all $i$. A valid move
consists of replacing $v_1\cdots v_n$ with $v_1\cdots v_m$ for some
$m\geq 1$ such that $m+1\leq v_{m+1}, v_{m+2},\ldots, v_n$ holds.
\end{lemma}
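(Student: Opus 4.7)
The plan is to observe that Lemma~\ref{L_MRsimple} is essentially a translation of the definitions under a trivial bijection, so the proof reduces to two small checks: the position set is closed under valid moves of the original Bernoulli game, and the move condition simplifies as claimed.

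First I would exhibit the bijection. A position of the instant Bernoulli game has the form $(12\cdots n, v_1\cdots v_n)$ with $1\le v_i\le i$; since the first word is entirely determined by $n$, the map $(12\cdots n, v_1\cdots v_n)\mapsto v_1\cdots v_n$ is a bijection onto the set of words described in the lemma. The inverse simply prepends the identity word $12\cdots n$.

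Next I would check that the set $\{(12\cdots n, v_1\cdots v_n)\::\: n\ge 1\}$ is closed under the valid moves of the original Bernoulli game, which makes the restriction sensible. Indeed, a valid move changes $(u_1\cdots u_n, v_1\cdots v_n)$ to $(u_1\cdots u_m, v_1\cdots v_m)$; since here $u_i=i$, the truncated first word is $12\cdots m$, which is again of the required form. Thus the restriction really is a game on this set.

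Finally, I would translate the move condition. In the general definition of the original Bernoulli game the condition for a valid move is $u_{m+1}\le v_j$ for $j=m+1,\ldots,n$. Substituting $u_{m+1}=m+1$ yields exactly the stated inequality $m+1\le v_{m+1},v_{m+2},\ldots,v_n$. Combining the three observations, the restated game has the same positions and the same valid moves (under the bijection) as the restriction of the original Bernoulli game, proving the lemma. The only step that requires any attention at all is verifying that the first-coordinate words remain of the form $12\cdots m$ after a move, which is immediate; no real obstacle arises.
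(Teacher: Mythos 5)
Your proposal is correct and matches the paper's (implicit) reasoning: the paper states Lemma~\ref{L_MRsimple} without proof, treating it as an immediate consequence of the fact that the first word $12\cdots n$ is determined by the rank, so that dropping it is a bijection, and substituting $u_{m+1}=m+1$ into the move condition of the original Bernoulli game gives exactly the stated inequality. Your three checks (bijectivity, closure of the position set under truncation, and translation of the move condition) are precisely the routine verifications being elided.
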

Lemma~\ref{L_MRsimple} offers the simplest possible way to visualize the
instant Bernoulli game, even if this is not a form in which
the applicability of Theorem~\ref{T_sBt} could be directly seen. For
that purpose we need to note that the isomorphism of games stated
in Lemma~\ref{L_oBiso} may be restricted to the set of positions of the
instant Bernoulli game, and we obtain the following representation.
\begin{lemma}
\label{L_iBiso}
The instant Bernoulli game is isomorphic to the
strongly Bernoulli type truncation game induced by 
$$
M=\{(p_1,u_1,v_1)\cdots (p_n,u_n,v_n)\::\: p_1\neq 1, u_1\leq v_1,
\ldots, v_n\}, 
$$
on the set of positions 
$$
P=\{(1,1,v_1)\cdots (n,n,v_n)\::\: 1\leq u_i, v_i\leq i\}\subset
({\mathbb P}^3)^*. 
$$
\end{lemma}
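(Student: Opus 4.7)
The plan is to derive this as a direct restriction of the isomorphism already established in Lemma~\ref{L_oBiso}. Recall that the original Bernoulli game is isomorphic to the truncation game $(P',M)$ via the map
\[
\Phi:(u_1\cdots u_n,v_1\cdots v_n)\mapsto (1,u_1,v_1)(2,u_2,v_2)\cdots (n,u_n,v_n),
\]
with the same $M$ as in the current statement. The instant Bernoulli game is by definition the restriction of the original Bernoulli game to the set of positions whose first component equals $12\cdots n$. Under $\Phi$, such a position maps to $(1,1,v_1)(2,2,v_2)\cdots (n,n,v_n)$, so the image of $\Phi$ restricted to positions of the instant Bernoulli game is precisely the set $P$ defined in the statement. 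Hence the restricted map is already a bijection of position sets.

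Next I would verify that $P$ is $M$-closed, so that the restriction game $(P,M)$ is well defined. A single truncation removes a suffix $(i+1,i+1,v_{i+1})\cdots (n,n,v_n)$. This suffix lies in $M$ precisely when $i+1\neq 1$ and $i+1\leq v_k$ for $k=i+1,\ldots,n$, which forces $i\geq 1$ and leaves the prefix $(1,1,v_1)\cdots (i,i,v_i)$, clearly still in $P$. Thus $P$ is closed under $M$-truncations.

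Finally I would match the moves. In the instant Bernoulli game a valid move from $(12\cdots n,v_1\cdots v_n)$ to $(12\cdots m,v_1\cdots v_m)$ with $m\geq 1$ is permitted iff $m+1\leq v_j$ for all $j\geq m+1$. Under $\Phi$ this translates into the removal of the suffix $(m+1,m+1,v_{m+1})\cdots (n,n,v_n)$, and the two conditions $m+1\neq 1$ and $m+1\leq v_{m+1},\ldots,v_n$ are exactly what it means for this suffix to lie in $M$. Therefore $\Phi$ restricts to an isomorphism of games between the instant Bernoulli game and $(P,M)$, as claimed. The only point that requires slight care — hence the ``main obstacle'' — is checking that the suffix being removed is a word starting with $p_1=m+1\neq 1$, which is automatic because valid moves in the instant Bernoulli game produce a word of length $m\geq 1$.
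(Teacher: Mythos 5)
Your proposal is correct and follows exactly the paper's route: the paper likewise obtains Lemma~\ref{L_iBiso} by restricting the isomorphism of Lemma~\ref{L_oBiso} to the positions of the instant Bernoulli game (it states this in the sentence preceding the lemma and gives no further proof). Your added verifications of $M$-closedness and of the move correspondence are sound and merely spell out what the paper leaves implicit.
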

Unless otherwise noted, we will use the simplified representation
stated in Lemma~\ref{L_MRsimple}. 
The kernel
positions of the instant Bernoulli game are identifiable with the
primitive elements of the Malvenuto-Reutenauer Hopf algebra, as it was
mentioned in the concluding remarks of~\cite{Hetyei-EKP}.
We call this game the instant Bernoulli game because this is a game
in which one of the players wins instantly: either there is no valid
move and the second player wins instantly, or the first player may
select the least $m\geq 1$ satisfying $m+1\leq v_{m+1}, v_{m+2},\ldots,
v_n$ and move to $v_1\cdots v_m$, thus winning instantly. The kernel
positions are identical to the winning positions in 
this game. The recursion formula (\ref{E_gkrec}) may be rewritten as
$$
n!=\kappa_n+\sum_{m=1}^{n-1} \kappa_m (n-m)!,
$$ 
(we start the summation with $\kappa_1$ since the first letter cannot
be removed), and the generating function of the numbers $\kappa_n$ is
easily seen to be 
\begin{equation}
\label{E_MRgf}
\sum_{n=1}^{\infty} \kappa_n t^n=1-\frac{1}{\sum_{n=0}^{\infty}n!t^n}.
\end{equation}
The numbers $\{\kappa_n\}_{n\geq 0}$ are listed 
as sequence A003319 in the On-Line Encyclopedia of Integer
Sequences~\cite{OEIS}, and count the number of {\em connected} or {\em
  indecomposable} permutations of $\{1,2,\ldots,n\}$. A permutation
$\pi\in S_n$ is {\em connected} if there is no $m<n$ such that $\pi$
takes the set $\{1,\ldots,m\}$ into itself. The kernel positions of the
instant Bernoulli game are directly identifiable with the connected
permutations in more than one ways. One way is mentioned at the end
of~\cite{Hetyei-EKP}, we may formalize that bijection using two variants of the 
well-known {\em inversion tables} (see, for example ~\cite[Section
  5.1.1]{Knuth} or \cite[Section 1.3]{Stanley-EC1}).  
\begin{definition}
Given a permutation $\pi\in S_n$ we define its {\em letter-based
  non-inversion table} as the word $v_1\cdots v_n$ where 
$v_j=1+|\{i<j\::\: \pi^{-1}(i)<\pi^{-1}(j)\}|$.
\end{definition}   
For example, for $\pi=693714825$ the letter-based non-inversion table
is $121351362$. This is obtained by adding $1$ to all entries in the
usual definition of an inversion table~\cite[Section 1.3]{Stanley-EC1}
of the permutation $\widetilde{\pi}=417396285$, defined by
$\widetilde{\pi}(i)=n+1-\pi(i)$ and taking the reverse of the resulting
word. In particular, for $\widetilde{\pi}=417396285$ we find the
inversion table $(1,5,2,0,4,2,0,1,0)$ in~\cite[Section
  1.3]{Stanley-EC1}. Our term {\em letter-based} refers to the fact that
here we associate the letter $j$ to $v_j$ and not the place $j$.    

A variant of the notion of letter-based non-inversion table is the
place-based non-inversion table. 

\begin{definition}
Given a permutation $\pi\in S_n$ we define its {\em place-based
  non-inversion table (PNT)} as the word $v_1\cdots v_n$ where 
$v_j=1+|\{i<j\::\: \pi(i)<\pi(j)\}|$.
\end{definition}   
Obviously the PNT of a permutation $\pi$ equals the letter-based
non-inversion table of $\pi^{-1}$. For example, for $\pi=583691472$ the PNT is 
$121351362$. We have $v_7=3=1+2$ because $\pi(7)=4$ is preceded by two
letters $\pi(i)$ such that $(\pi(i),\pi(7))$ is not an inversion. Any
PNT $v_1\cdots v_n$ is a word satisfying $1\leq i\leq v_i$.

\begin{lemma}
\label{L_MRc}
A position $v_1\cdots v_n$ in the instant Bernoulli game is a kernel
position if and only if it is the place-based (letter-based)
non-inversion table of a connected permutation.
\end{lemma}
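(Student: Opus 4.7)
The plan is to work with the simplified representation of Lemma~\ref{L_MRsimple} and reduce the claim to a direct combinatorial translation between the condition defining a kernel position and the condition defining connectedness of a permutation in terms of its PNT. The strategy has three components: unfolding what it means to be a kernel position in this particular ``instant'' game, identifying the combinatorial condition characterizing connectedness via the PNT, and observing that the two conditions are negations of each other in a matched way.

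First, I would justify the ``instant'' characterization of kernel positions already foreshadowed in the excerpt. The claim is that the first player wins whenever he has any legal move, by selecting the least $m$ with $m+1 \leq v_{m+1},\ldots,v_n$: from $v_1\cdots v_m$ the second player can have no legal move, since a witness $m' < m$ would, combined with the failure of the condition at level $m'$ in the original word, force some $j \in \{m+1,\ldots,n\}$ with $v_j \leq m'$, contradicting the choice of $m$. Consequently, $v_1\cdots v_n$ is a kernel position if and only if no legal move exists, i.e., for every $m \in \{1,\ldots,n-1\}$ there is some $j > m$ with $v_j \leq m$.

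Second, I would identify words $v_1\cdots v_n$ with $1 \leq v_i \leq i$ with permutations in $S_n$ via the PNT bijection (standard). The heart of the lemma is then the claim: for the PNT $v$ of $\pi \in S_n$ and for $m \in \{1,\ldots,n-1\}$,
$$ v_j \geq m+1 \text{ for all } j > m \iff \pi(\{1,\ldots,m\}) = \{1,\ldots,m\}. $$
The forward direction is immediate: if $\pi(\{1,\ldots,m\}) = \{1,\ldots,m\}$ and $j > m$, then $\pi(j) > m$ exceeds each of $\pi(1),\ldots,\pi(m)$, contributing $m$ to the count in the definition of $v_j$. For the reverse, if $\pi(\{1,\ldots,m\}) \neq \{1,\ldots,m\}$, pick $a \leq m$ with $a \notin \pi(\{1,\ldots,m\})$ and set $j = \pi^{-1}(a) > m$; then $\pi(j) = a$ has at most $a-1 \leq m-1$ predecessors smaller than it, so $v_j \leq a \leq m$, contradicting $v_j \geq m+1$.

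Combining the two gives the PNT version of the lemma: $v$ is a kernel position iff the condition $v_j \geq m+1$ for all $j > m$ fails for every $m \in \{1,\ldots,n-1\}$, iff $\pi(\{1,\ldots,m\}) \neq \{1,\ldots,m\}$ for every such $m$, iff $\pi$ is connected. The letter-based variant then follows from the observation that the PNT of $\pi$ coincides with the letter-based non-inversion table of $\pi^{-1}$ together with the fact that $\pi$ is connected iff $\pi^{-1}$ is. I expect the main obstacle to be the second step, where one must set up the PNT inequalities and the counts of small predecessors cleanly; once the setup is right, it becomes a routine check.
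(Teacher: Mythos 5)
Your proposal is correct and follows essentially the same route as the paper: the paper's proof likewise reduces the claim to the equivalence between ``$m+1\leq v_{m+1},\ldots,v_n$'' and ``$\pi$ takes $\{1,\ldots,m\}$ into itself'' (which it declares easy to verify and you verify in detail), uses the already-noted fact that kernel positions of the instant game are exactly those with no legal move, and handles the letter-based variant via closure of connected permutations under inversion. The only difference is that you spell out the steps the paper leaves to the reader.
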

\begin{proof}
We prove the place-based variant of the lemma, the letter-based version
follows immediately since the set of connected permutations is closed
under taking inverses. It is easy to verify that the place-based
non-inversion table $v_1\cdots v_n$ of a permutation $\pi$ satisfies 
$m+1\leq v_{m+1},\ldots, v_n$ if and only if $\pi$ takes the set
$\{1,\ldots,m\}$ into itself. Thus the first player has no valid move if
and only if $\pi$ is connected.
\end{proof}
The study of connected permutations goes back to the work of
Comtet~\cite{Comtet-n!,Comtet-AC}, for a reasonably complete
list of references we refer to the entry A003319 in the On-Line
Encyclopedia of Integer Sequences~\cite{OEIS}. It was shown by Poirier
and Reutenauer~\cite{Poirier-Reutenauer} that the connected permutations
form a free algebra basis of the Malvenuto-Reutenauer Hopf-algebra,
introduced by Malvenuto and Reutenauer~\cite{Malvenuto-Reutenauer}.
The same statement appears in dual form in the work of Aguiar and
Sottile~\cite{Aguiar-Sottile}. 

Although the instant Bernoulli game is very simple,
Theorem~\ref{T_sBt} offers a nontrivial analysis of its kernel
positions, allowing to identify a unique structure on each connected
permutation. We begin with stating the following analogue of
Theorem~\ref{T_b2k}.

\begin{theorem}
\label{T_MRk}
The number $\kappa_n$ of connected permutations of rank $n$ 
is given by 
$$
\kappa_n=\sum_{k=1}^{n-1}\sum_{1\leq i_1<i_2<\cdots <i_{k+1}=n}
\prod_{j=1}^k (i_{j+1}-i_j-1)!\cdot i_j.
$$
\end{theorem}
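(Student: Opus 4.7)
The plan is to specialize the strategy from Section~\ref{s_ob2} to the instant Bernoulli game, using the isomorphism of Lemma~\ref{L_iBiso} together with the structural Theorem~\ref{T_sBt}. Since positions of this game correspond to words $(1,1,v_1)(2,2,v_2)\cdots(n,n,v_n)\in P$ with $u_k=k$ forced throughout, I first would enumerate the elementary kernel factors compatible with this restriction.

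For a segment of type $(a,b)$ with $2\leq a\leq b$, Lemma~\ref{L_ekf} (with $u_k=k$) becomes: condition (i) is automatic; condition (ii) reduces to $v_b<a$, giving $a-1$ choices; condition (iii) reduces to $a\leq v_k\leq k$ for $k\in\{a,\ldots,b-1\}$, giving $k-a+1$ choices each. Multiplying shows that the number of elementary kernel factors of type $(a,b)$ equals
\[
(a-1)\prod_{k=a}^{b-1}(k-a+1)=(a-1)(b-a)!.
\]
For $a=1$ the only option is $b=1$ and the unique factor is $(1,1,1)$.

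Next I would verify that in every kernel position $(1,1,1)(2,2,v_2)\cdots(n,n,v_n)$, the leading elementary kernel factor is forced to be the singleton $(1,1,1)$: since that triple already has $p_1=1$, it is not in $M$, and Definition~\ref{D_ekp} then forces the first elementary kernel factor in the unique decomposition to have length one. Hence the remaining factors partition $\{2,\ldots,n\}$ into consecutive blocks $\{i_j+1,\ldots,i_{j+1}\}$ for $j=1,\ldots,k$, encoded by an increasing sequence $1=i_1<i_2<\cdots<i_{k+1}=n$.

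Finally I would assemble the count via Theorem~\ref{T_sBt}. Setting $a=i_j+1$ and $b=i_{j+1}$ in $(a-1)(b-a)!$ yields the factor $i_j(i_{j+1}-i_j-1)!$, and multiplying independent choices across the $k$ blocks produces the stated formula. The only mild subtlety is checking that the first factor must be trivial, forcing $i_1=1$; once this is in hand, the rest is a direct specialization of the enumeration from Section~\ref{s_ob2}, and I expect no further obstacle.
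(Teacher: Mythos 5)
Your proposal is correct and follows essentially the same route as the paper: identify the kernel positions of the instant Bernoulli game via Lemma~\ref{L_iBiso}, observe that the leading elementary kernel factor is forced to be the singleton at position $1$, count the elementary kernel factors of type $(i,j)$ as $(i-1)(j-i)!$ by specializing Lemma~\ref{L_ekf} to $u_k=k$ (the paper does the identical computation in the simplified representation of Lemma~\ref{L_MRsimple}, obtaining $\kappa(i,j)=(j-i)!\,(i-1)$), and assemble the sum over block decompositions via Theorem~\ref{T_sBt}. Your explicit remark that the decomposition forces $i_1=1$ is the right reading of the summation range, and the only ingredient you leave implicit is the identification of kernel positions with connected permutations (Lemma~\ref{L_MRc}), which the paper's proof cites at the outset.
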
  
\begin{proof}
By Lemma~\ref{L_MRc}, $\kappa_n$ is the number of kernel positions of
rank $n$ in the instant Bernoulli game. The fact that this number is
equal to the expression on the right hand side may be shown similarly to
the proof of Theorem~\ref{T_b2k}. Consider the equivalent representation of the
instant Bernoulli game given in Lemma~\ref{L_MRsimple}. Note that this
is obtained from the representation given in Lemma~\ref{L_iBiso} by
deleting the ``redundant coordinates'' $i,i$ from each letter
$(i,i,v_i)$. Given an
arbitrary kernel position $v_1\cdots v_n$, the first letter $v_1=1$
corresponds to an elementary kernel factor of type $(1,1)$ 
and we have $\kappa(1,1)=1$. 
For $2\leq i\leq  j$, by abuse of terminology, 
let us call $v_i\cdots v_j$ an elementary kernel factor of type
$(i,j)$ if it corresponds to an elementary kernel factor in the
equivalent representation in Lemma~\ref{L_iBiso}.
The elementary kernel factors of type $(i,j)$ are then exactly
those words $v_i\cdots v_j$ for which $i\leq v_i,\ldots, v_{j-1}$ and $v_j<i$
hold. Thus their number is 
\begin{equation}
\label{E_MRij}
\kappa(i,j)=(j-i)!\cdot (i-1),
\end{equation}
The statement now follows from the obvious formula 
$$
\kappa_n=\kappa(1,1)\cdot \sum_{k=1}^{n-1}
\sum_{1=i_0<i_1<\cdots<i_{k+1}=n}
\prod_{j=1}^k \kappa(i_j+1,i_{j+1}).
$$
\end{proof}
In analogy to Proposition~\ref{P_b2krec}, we may also use (\ref{E_MRij})
to obtain a recursion formula for the number of connected permutations.
We end up with a formula that was first discovered by King~\cite[Theorem
  4]{King}. 
\begin{proposition}[King]
\label{P_MRrec}
For $n\geq 2$, the number $\kappa_n$ of connected permutations of rank
$n$ satisfies the recursion formula
$$
\kappa_n=\sum_{i=1}^{n-1} \kappa_i (n-i-1)!i.
$$
\end{proposition}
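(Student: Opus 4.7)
The plan is to imitate the proof of Proposition~\ref{P_b2krec}, now using the elementary kernel factor count (\ref{E_MRij}) in place of Lemma~\ref{L_ekfc}. First I would pass from the simplified description of Lemma~\ref{L_MRsimple} to the equivalent strongly Bernoulli type truncation game on $({\mathbb P}^3)^*$ furnished by Lemma~\ref{L_iBiso}, so that Theorem~\ref{T_sBt} applies and every kernel position of rank $n$ admits a unique factorization as a concatenation of elementary kernel factors. By Lemma~\ref{L_MRc}, counting these kernel positions is the same as counting connected permutations of rank $n$.

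Second, given a kernel position of rank $n\geq 2$, I would isolate its rightmost elementary kernel factor. By Theorem~\ref{T_sBt} this factor has a uniquely determined type $(i+1,n)$ for some $i\in\{1,\ldots,n-1\}$, and deleting it leaves a word that, again by the uniqueness clause of Theorem~\ref{T_sBt}, is itself a kernel position of rank $i$. Conversely, for any $i\in\{1,\ldots,n-1\}$ the concatenation of a kernel position of rank $i$ with an arbitrary elementary kernel factor of type $(i+1,n)$ is a kernel position of rank $n$. Because the two building blocks may be chosen independently, this produces the intermediate identity
$$
\kappa_n=\sum_{i=1}^{n-1} \kappa_i\cdot \kappa(i+1,n).
$$

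Third, I would substitute $\kappa(i+1,n)=(n-i-1)!\cdot i$ from equation (\ref{E_MRij}), which was established inside the proof of Theorem~\ref{T_MRk}, to obtain exactly the stated recursion. There is essentially no obstacle here; this is the exact analogue of Proposition~\ref{P_b2krec}. The only subtlety worth flagging is that the summation correctly starts at $i=1$ rather than $i=0$: the leading letter $v_1=1$ always constitutes the unique type $(1,1)$ elementary kernel factor, so for $n\geq 2$ the rightmost elementary kernel factor is never the type $(1,1)$ one, and the kernel position preceding the rightmost factor has rank at least $1$.
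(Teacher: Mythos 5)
Your proposal is correct and follows essentially the same route as the paper, which simply notes that the proof of Proposition~\ref{P_b2krec} carries over verbatim: remove the last elementary kernel factor, of type $(i+1,n)$, to get $\kappa_n=\sum_i \kappa_i\,\kappa(i+1,n)$, and substitute the count $\kappa(i+1,n)=(n-i-1)!\,i$ from (\ref{E_MRij}). Your remark about why the sum starts at $i=1$ (the leading letter is always the unique type-$(1,1)$ factor) is a correct and welcome clarification of a point the paper leaves implicit.
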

The proof may be presented the same way as for
Proposition~\ref{P_b2krec}, by removing the last elementary kernel
factor of type $(i,n)$, using informal notion of an elementary kernel
factor as in the proof of Theorem~\ref{T_MRk}. King's proof is worded
differently, but may be shown to yield a bijectively equivalent decomposition. 
\begin{lemma}
The induction step presented in King's proof of Proposition~\ref{P_MRrec} is
equivalent to the removal of the last
elementary kernel factor in the place-based non-inversion table of 
$\widetilde{\sigma}(1)\widetilde{\sigma}(2)\cdots\widetilde{\sigma}(n)$. 
Here $\widetilde{\sigma}(i)=n+1-\sigma(n+1-i)$.
\end{lemma}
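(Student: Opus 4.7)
The plan is to verify the equivalence by spelling out King's inductive construction from~\cite{King} step by step, applying the reverse-complement involution $\sigma\mapsto \widetilde{\sigma}$ at each stage, and then reading off the effect on the PNT of the resulting image. First I would recall that King's step builds a connected $\sigma\in S_n$ from a connected $\rho\in S_i$ (for some $1\leq i\leq n-1$) by inserting $n-i$ new values in a prescribed way: the construction selects one of $i$ distinguished ``anchor'' positions (respectively values) and then freely orders the remaining $n-i-1$ new entries in $(n-i-1)!$ ways, which is exactly the contribution $(n-i-1)!\cdot i$ in Proposition~\ref{P_MRrec}.

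Next I would verify that the involution $\widetilde{\sigma}(i)=n+1-\sigma(n+1-i)$ restricts to a bijection on connected permutations, since reversing the word $\sigma(1)\cdots \sigma(n)$ and complementing each entry turns an invariant prefix $\{1,\ldots,m\}$ into an invariant suffix $\{n-m+1,\ldots,n\}$, so connectedness is preserved. Under this bijection, the modifications King performs near the beginning of $\sigma$ and on small values translate into modifications near the end of $\widetilde{\sigma}$ on large values. Concretely, King's choice among $i$ anchors becomes a choice of $\widetilde{\sigma}(n)\in\{1,\ldots,i\}$ (the only $i$ values that can occur at position $n$ in the image, because the $n-i$ new entries end up being the largest), and his free ordering of the remaining $n-i-1$ insertions becomes a free assignment of the remaining large values to positions $i+1,\ldots,n-1$.

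I would then compute the PNT $v_1\cdots v_n$ of $\widetilde{\sigma}$. The first $i$ entries depend only on $\widetilde{\sigma}(1),\ldots,\widetilde{\sigma}(i)$ after relabelling, so they form the PNT of the connected permutation of order $i$ obtained from $\rho$ by the same involution. For the last $n-i$ entries, the letters $v_{i+1},\ldots, v_{n-1}$ satisfy $v_k\geq i+1$ because each of $\widetilde{\sigma}(i+1),\ldots,\widetilde{\sigma}(n-1)$ is larger than every earlier letter, while $v_n\in\{1,\ldots,i\}$ because $\widetilde{\sigma}(n)$ is the distinguished small value. This identifies $v_{i+1}\cdots v_n$ with an arbitrary elementary kernel factor of type $(i+1,n)$ in the sense of the proof of Theorem~\ref{T_MRk}, and deleting it leaves the PNT of the connected permutation in $S_i$ corresponding to $\widetilde{\rho}$. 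Conversely, any such factor arises in this way.

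The main obstacle I expect is the bookkeeping in step two: King phrases his construction in terms of positions and insertions performed on $\sigma$, whereas the PNT of $\widetilde{\sigma}$ records relative order after a double reversal, so one must carefully match which of King's $i$ ``anchor'' choices produces which value of $v_n$ and which ordering of $(n-i-1)$ new entries produces which sequence $v_{i+1}\cdots v_{n-1}$. Once this dictionary is nailed down, the rest is a routine verification that removing King's last insertion step on $\sigma$ corresponds precisely to removing the last elementary kernel factor on the PNT of $\widetilde{\sigma}$, and that both recursions factor $\kappa_n$ as $\kappa_i\cdot (n-i-1)!\cdot i$ in the same way.
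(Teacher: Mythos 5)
Your overall strategy---conjugate King's step by the involution $\sigma\mapsto\widetilde{\sigma}$ and read off its effect on the place-based non-inversion table---is the same as the paper's, and the structural facts you isolate (the first $i$ entries of the PNT form the PNT of the reduced connected permutation; $v_k\geq i+1$ for $i+1\leq k\leq n-1$; $v_n\leq i$) are the right ones. The genuine gap is that the entire content of the lemma is the precise identification of King's induction step with these data, and you never supply it: you replace King's actual step with a loosely described ``insertion with anchors'' construction and explicitly defer the matching as bookkeeping to be ``nailed down.'' King's step is a two-stage reduction, not an insertion: first set $r=\sigma(1)$, delete it and decrement the letters exceeding $r$ to obtain $\pi\in S_{n-1}$; then take the largest $j$ with $\pi(\{1,\ldots,j\})=\{1,\ldots,j\}$ and strip that prefix. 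One must check that the first stage corresponds to deleting $v_n=n+1-r$ (note $v_n=\widetilde{\sigma}(n)$), that the second corresponds, via Lemma~\ref{L_MRc}, to deleting $v_{n-j}\cdots v_{n-1}$, and hence that the deleted block is $v_{n-j}\cdots v_n$ with $i=n-1-j$. A construction that merely reproduces the count $\kappa_i(n-i-1)!\,i$ of Proposition~\ref{P_MRrec} does not by itself establish that it is \emph{King's} step. Moreover the parts of the dictionary you do state contain slips that show it has not been checked: the $n-i$ ``new'' entries of $\widetilde{\sigma}$ are not all among the largest values (the entry $\widetilde{\sigma}(n)=n+1-r\leq i$ is small), and for $i+1\leq k\leq n-1$ the letter $\widetilde{\sigma}(k)$ exceeds every letter in positions $1,\ldots,i$ but not ``every earlier letter.'' The conclusions $v_n\leq i$ and $v_k\geq i+1$ survive these slips, but the correspondence itself remains unverified.

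A second, smaller omission: after showing that $v_1\cdots v_i$ is a kernel position and that $v_{i+1}\cdots v_n$ satisfies the defining inequalities of an elementary kernel factor of type $(i+1,n)$, you must still argue that this block is the \emph{last factor of the unique decomposition} of $v_1\cdots v_n$, rather than a union or a fragment of factors. This does follow, either from the uniqueness assertion of Theorem~\ref{T_sBt} or from the paper's direct argument that no valid move can land strictly inside an elementary kernel factor, but it needs to be said explicitly.
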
 
\begin{proof}
Let $\sigma(1)\cdots \sigma(n)$ be the connected permutation considered
in King's proof, and let $v_1\cdots v_n$ be the PNT of
$\widetilde{\sigma}(1)\widetilde{\sigma}(2)\cdots 
\widetilde{\sigma}(n)$. King's proof first identifies $\sigma(1)=r$. 
This is equivalent to setting $v_n=n+1-r$. King
then defines $\pi(1)\cdots \pi(n-1)$ as the permutation obtained by
deleting $\sigma(1)$ and subtracting $1$ from all letters greater than
$r$. Introducing $\widetilde{\pi}(i)=n-\pi(n-i)$, the permutation
$\widetilde{\pi}(1)\cdots\widetilde{\pi}(n-1)$ is obtained from
$\widetilde{\sigma}(1)\widetilde{\sigma}(2)\cdots \widetilde{\sigma}(n)$ 
by deleting the last letter $n+1-r$ and by decreasing all letters
greater than $n+1-r$ by one. The PNT of 
$\widetilde{\pi}(1)\cdots\widetilde{\pi}(n-1)$ is
thus $v_1\cdots v_{n-1}$. King then defines $j$ as the largest $j$
such that $\pi(\{1,\ldots,j\})=\{1,\ldots,j\}$. This is equivalent to
finding the least $n-j$ such that
$\widetilde{\pi}(\{n-j,n-j+1,\ldots,n-1\})=\{n-j,n-j+1,\ldots,n-1\}$. 
Using the proof of Lemma~\ref{L_MRc}, this is easily seen to be
equivalent to finding the smallest $n-j$ such that $v_{n-j}=n-j$ and for
all $n-j\leq k\leq n-1$ we have $v_{k}\geq n-j$. King defines
$\beta(\pi)$ as the permutation obtained from $\pi$ by removing
$\pi(1)\cdots \pi(j)$ and then subtracting $j$ from each
element. Correspondingly, we may define
$\widetilde{\beta}(\widetilde{\pi})$ as the permutation obtained from 
$\widetilde{\pi}$ by removing $\widetilde{\pi}(n-j)\cdots
\widetilde{\pi}(n-1)$. The PNT of
$\widetilde{\beta}(\widetilde{\pi})$ is then $v_1\cdots v_{n-j-1}$,
representing a kernel position in the instant Bernoulli game. This is
the kernel position of the least rank that is reachable from $v_1\cdots
v_{n-1}$. In terms of elementary kernel factors, the removal of $v_{n}$
makes the first player able to remove the rest of the last elementary kernel
factor in a single valid move, we only need to show that the fist player
cannot move to a position $v_1\cdots v_k$ where $r\leq k\leq s$ for some
elementary kernel factor $v_r\cdots v_s$. Assume by way of contradiction
that such a move is possible. By definition of a valid move,
we then have $k\leq v_s$, implying  $r\leq v_s$, in contradiction with
the definition of the elementary kernel factor $v_r\cdots
v_s$. Therefore $v_1\cdots v_{n-j-1}$ is obtained from $v_1\cdots v_n$
by removing exactly the last elementary kernel factor.
\end{proof}

King~\cite{King} uses the removal of the last elementary kernel factor
to recursively define a {\em transposition Gray code} of all connected
permutations of a given rank. A transposition Gray code is a list of 
permutations such that subsequent elements differ by a transposition.
Using place-based non-inversion tables, not only the last elementary
kernel factor is easily identifiable, but the entire unique
decomposition into elementary kernel factors is transparent. This gives 
rise to a new way to systematically list all connected permutations.
The resulting list is not a transposition Gray code, but it is fairly 
easy to generate. 

To explain the construction, consider the connected permutation 
$\pi=251376948$. Its letter-based non-inversion table is $v_1\cdots
v_8=121355748$ whose decomposition into elementary kernel factors is 
$1\cdot 21\cdot 3\cdot 5574\cdot 8$. For $i<j$, each elementary kernel
factor of type $(i,j)$ begins with $i$, all entries in the factor are at
least $i$, except for the last letter which is less than $i$.  
For $i=1$, $1$ is a special elementary kernel factor, for $i>1$ a kernel
factor of type $(i,i)$ is a positive integer less than $i$.
\begin{definition}
Given a connected permutation $\pi$, we define its {\em elevation $E(\pi)$} as
the permutation whose PNT is obtained from 
the PNT of $\pi$ as follows: for 
each elementary kernel factor of type $(i,j)$, increase the last letter in
the factor to $j$. 
\end{definition}
For example, the PNT of the elevation of $251376948$ is 
$1\cdot 23\cdot 4\cdot 5578\cdot 9$, thus $E(\pi)$ is 
$123465789$. The PNT of $E(\pi)$ is written as a product of factors,
such that each factor $u_i\cdots u_j$ ends with $j$,
and all letters after $u_j$ are more than $j$. We may use this
observation to prove that each factor $u_i\cdots u_j$ ends with a $j$
that is a {\em strong fixed point} $j$.
\begin{definition}
A number $i\in\{1,\ldots,n\}$ is a strong fixed point of a permutation
$\sigma$ of $\{1,\ldots,n\}$ if $\sigma(i)=i$ and
$\sigma(\{1,\ldots,i\})=\{1,\ldots,i\}$. We denote the set of strong
fixed points of $\sigma$ by $\SF(\sigma)$.
\end{definition}
\begin{remark}
The definition of a strong fixed point may be found in Stanley's
book~\cite[Ch.\ 1, Exercise 32b]{Stanley-EC1}, where it is stated that
the number $g(n)$ of permutations of rank $n$ with no strong fixed points
has the generating function 
$$
\sum_{n\geq 0} g(n) t^n=\frac{\sum_{n\geq 0} n! t^n}{1+t \sum_{n\geq 0} n! t^n}.
$$ 
\end{remark}
\begin{lemma}
\label{L_ifp}
Let $v_1\cdots v_n$ be the PNT of a permutation $\sigma$. Then $j$ is an
 strong fixed point of $\sigma$ if and only if $v_j=j$ and for all $k>j$
 we have $v_k>j$.
\end{lemma}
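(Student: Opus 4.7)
The plan is to unfold the two definitions and exploit the bijectivity of $\sigma$ to translate each side into the other. Throughout, I write $v_k = 1 + N_k$, where $N_k = |\{i<k : \sigma(i) < \sigma(k)\}|$.

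For the forward direction, suppose $j$ is a strong fixed point, so $\sigma(j)=j$ and $\sigma(\{1,\ldots,j\}) = \{1,\ldots,j\}$. Then for every $i<j$ we have $\sigma(i) \leq j-1 < j = \sigma(j)$, so $N_j = j-1$ and $v_j=j$. For $k>j$, bijectivity forces $\sigma(k) \in \{j+1,\ldots,n\}$, so in particular every index $i \leq j$ (there are $j$ of them) satisfies $\sigma(i) \leq j < \sigma(k)$; hence $N_k \geq j$ and $v_k \geq j+1 > j$.

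For the converse, assume $v_j = j$ and $v_k > j$ for all $k > j$. From $v_j = j$ we get $N_j = j-1$, which since there are only $j-1$ indices $i<j$ forces $\sigma(i) < \sigma(j)$ for every $i<j$; equivalently, $\sigma(j) = \max\{\sigma(1),\ldots,\sigma(j)\}$. Next, for any $k > j$, the obvious upper bound
\[
N_k \leq |\{\ell : 1 \leq \ell < \sigma(k)\}| = \sigma(k)-1
\]
combined with $v_k > j$ yields $\sigma(k) \geq v_k > j$, so $\sigma(k) \in \{j+1,\ldots,n\}$. Thus $\sigma(\{j+1,\ldots,n\}) \subseteq \{j+1,\ldots,n\}$, and bijectivity of $\sigma$ promotes this to the equality $\sigma(\{1,\ldots,j\}) = \{1,\ldots,j\}$. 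Together with the fact that $\sigma(j)$ is the maximum of $\sigma(1),\ldots,\sigma(j)$, this gives $\sigma(j) = j$, so $j \in \SF(\sigma)$.

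There is no real obstacle here: the lemma is essentially a dictionary between the two definitions, and the only genuine step is the inequality $N_k \leq \sigma(k)-1$, which isolates how a small value of $\sigma(k)$ would cap $v_k$. Everything else follows from counting and bijectivity of $\sigma$.
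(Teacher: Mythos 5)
Your proof is correct and follows the same route the paper takes (the paper merely asserts, in the remark following the lemma, that the condition on the $v_k$ for $k>j$ is equivalent to $\sigma(\{1,\ldots,j\})=\{1,\ldots,j\}$ and that, given this, $v_j=j$ is equivalent to $\sigma(j)=j$). You have simply supplied the counting details, in particular the bound $N_k\leq\sigma(k)-1$, that the paper leaves as ``easily seen.''
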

In fact, the condition $\forall j (k>j\implies v_k>j)$ is easily seen to
be equivalent to $\sigma(\{1,\ldots,j\})=\{1,\ldots,j\}$. Assuming this
is satisfied, $j$ is a fixed point of $\sigma$ if and only if $v_j=j$.
As a consequence of Lemma~\ref{L_ifp} the last letters of the elementary 
kernel factors of the PNT of $\pi$ mark strong fixed points of
$E(\pi)$. The converse is not necessarily true: in our example $7$ is an
strong fixed point of $E(\pi)$; however, no elementary kernel factor of
the PNT of $\pi$ ends with $v_7$. On the other hand, $v_1$ is always a special
elementary kernel factor by itself and the last elementary
kernel factor must end at $v_n$, thus $1$ and $n$ must always be
strong fixed points of $E(\pi)$. The numbers $1$ and $n$ are also special
in the sense that $i\in\{1,n\}$ is an strong fixed point if and only if
it is a fixed point. 

\begin{theorem}
\label{T_epi}
Let $\sigma\in S_n$ be a permutation satisfying $\sigma(1)=1$ and
$\sigma(n)=n$ and let the strong fixed points of $\sigma$ be 
$1=i_0<i_1<\cdots<i_{k+1}=n$. Then there are
exactly $(i_1+1)\cdots (i_k+1)$ connected permutations $\pi$ whose elevation is
$\sigma$. 
\end{theorem}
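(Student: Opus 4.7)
The plan is to parametrize each connected permutation $\pi$ with $E(\pi)=\sigma$ by the pair consisting of its set of factor endpoints (inside the open interval $(1,n)$) together with the values its PNT takes at those endpoints. Denote the PNT of $\sigma$ by $u_1\cdots u_n$. If $\pi$ has PNT $v_1\cdots v_n$ and elementary-kernel decomposition with endpoints $1=e_0<e_1<\cdots<e_q=n$, then the elevation $E$ acts by sending $v_{e_r}\mapsto e_r$ and leaving every other coordinate fixed, so $v_s=u_s$ at every non-endpoint $s$ while $v_{e_r}\in\{1,\ldots,e_{r-1}\}$ at each endpoint. Thus the only freedom in reconstructing $\pi$ from $\sigma$ lies in (i) which positions serve as endpoints and (ii) which admissible value is placed at each of them.

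I would then pin down freedom (i). Because $u_{e_r}=e_r$ and every subsequent entry of $u$ lies in a later factor beginning at index $\ge e_r+1$, hence exceeds $e_r$, Lemma~\ref{L_ifp} forces $e_r$ to be a strong fixed point of $\sigma$; so the endpoint set has the form $\{1\}\cup S\cup\{n\}$ for some $S\subseteq\{i_1,\ldots,i_k\}$. Conversely, fix any such $S$, write the sorted union as $\{t_0<t_1<\cdots<t_p\}$, and for each $r\ge 1$ choose $v_{t_r}\in\{1,\ldots,t_{r-1}\}$ independently, setting $v_s=u_s$ elsewhere. To check that this word decomposes into elementary kernel factors $(1,1)$ and $(t_{r-1}+1,t_r)$, the only nontrivial inequality is $u_s\ge t_{r-1}+1$ for $t_{r-1}<s<t_r$; this is immediate from the PNT formula, since $\sigma(\{1,\ldots,t_{r-1}\})=\{1,\ldots,t_{r-1}\}$ forces the first $t_{r-1}$ positions of $\sigma$ to carry values strictly below $\sigma(s)$. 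Theorem~\ref{T_sBt} and Lemma~\ref{L_MRc} then certify that the resulting word is the PNT of a connected permutation, and reversing the elevation recovers $\sigma$.

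The count now assembles itself:
\begin{equation*}
\sum_{S\subseteq\{i_1,\ldots,i_k\}}\;\prod_{r=1}^{p}t_{r-1}
=\sum_{S\subseteq\{i_1,\ldots,i_k\}}\;\prod_{s\in S}s
=\prod_{j=1}^{k}(1+i_j),
\end{equation*}
using $t_0=1$ together with $\{t_1,\ldots,t_{p-1}\}=S$ for the first equality, and the standard subset-expansion of a product for the second. I expect the main obstacle to be the converse side of freedom (i): verifying that every subset of interior strong fixed points really does arise as the factor-endpoint set of some $\pi$, so that the parametrization is a bijection and not merely an injection. Once that bookkeeping is in place, the enumeration reduces to the one-line identity above.
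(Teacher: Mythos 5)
Your argument is correct and follows essentially the same route as the paper's own proof: both parametrize the permutations $\pi$ with $E(\pi)=\sigma$ by a subset of the interior strong fixed points (the factor endpoints) together with an independent choice of the last letter of each factor, and both evaluate the resulting sum $\sum_{S}\prod_{s\in S}s=\prod_j(1+i_j)$. The converse direction you flag as the ``main obstacle'' is already handled by your second paragraph (and is dispatched in the paper by the remark that the inequalities are automatic once the endpoints are chosen among the strong fixed points), so nothing further is needed.
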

\begin{proof}
Assume $E(\pi)=\sigma$ and the PNT of $\pi$ is the product of
elementary factors of type $(1,1)$, $(j_0+1,j_1)$, $(j_1+1,j_2)$, \ldots, 
$(j_l+1,j_{l+1})$, where $1=j_0<j_1<\cdots<j_{l+1}=n$. As we have seen
above, $\{j_1,\ldots,j_l\}$ must be a subset of $\{i_1,\ldots,i_k\}$.
This condition is also sufficient since we may decompose the PNT of
$\sigma$ as $u_1\cdot (u_{j_0+1}\cdots u_{j_1})\cdots (u_{j_l+1},u_{j_{l+1}})$,
and decrease the value of each $u_{j_t}=j_t$ (where $t=1,2,\ldots,l+1$)
independently to any number that is at most $j_{t-1}$. Note that
each $u_{j_t+1}=j_t+1$, and the required inequalities for all other $u_j$s are
automatically satisfied as a consequence of having selected the $j_t$s
from among the strong fixed points. Thus we obtain the PNT
of a connected permutation, whose kernel factors are of type 
$(1,1)$, $(j_0+1,j_1)$, $(j_1+1,j_2)$, \ldots, 
$(j_l+1,j_{l+1})$. Therefore the number of permutations $\pi$ satisfying 
$E(\pi)=\sigma$ is 
$$
\sum_{l=0}^k \sum_{\{j_1,\ldots,j_l\}\subseteq \{i_1,\ldots,i_k\}}
j_1\cdots j_l=(i_1+1)\cdots (i_k+1).
$$
\end{proof}
The proof of Theorem~\ref{T_epi} suggests a straightforward way to list
the PNTs of all connected permutations of rank $n$:
\begin{itemize}
\item[(1)] List all words $u_1\cdots u_n$ satisfying $u_1=1$, $u_n=n$
  and  $1\leq u_i\leq i$ for all $i$. These are the PNTs of all
  permutations of rank $n$, of which $1$ and $n$ are fixed points.
\item[(2)] For each $u_1\cdots u_n$, identify the places of strong
  fixed points by finding all $i$s such that $u_i=i$ and $u_k>i$ for all
  $k>i$.
\item[(3)] For each $u_1\cdots u_n$ select a subset
  $\{j_1,\ldots,j_l\}$ of the set of strong fixed points satisfying
$1<j_1<\cdots<j_l<n$ and decrease the values of each $u_{j_t}$ 
to any number in $\{1,\ldots,j_{t-1}\}$. Output these as the PNTs of
  connected permutations. 
\end{itemize}
Steps and $(1)$ and $(3)$ involve nothing more than listing words using
some lexicographic order, step $(2)$ may be performed after reading each
word once.  

As a consequence of Theorem~\ref{T_epi} we obtain the following formula
for the number of connected permutations of rank $n\geq 2$:
$$
\kappa_n=\sum_{\substack{\sigma\in S_n\\\sigma(1)=1, \sigma(n)=n}} \prod_{i\in
  \SF(\sigma)\setminus\{1,n\}} (i+1).
$$
After removing the redundant letters $\sigma(1)=1$ and $\sigma(n)=n$ and
decreasing all remaining letters by $1$, we obtain that
\begin{equation}
\label{E_IF}
\kappa_n=\sum_{\sigma\in S_{n-2}} \prod_{i\in
  \SF(\sigma)} (i+2)\quad\mbox{holds for $n\geq 2$}.
\end{equation}
Equation (\ref{E_IF}) offers a new combinatorial model for the numbers
counting the connected permutations of rank $n\geq 2$: it is the total weight
of all permutations of rank $n-2$, using a weighting which assigns the
most value to those permutations which have the most strong fixed points
and are thus in a sense the farthest from being connected. 

\section{The polynomial Bernoulli game of the second kind, indexed by
$x$}
\label{s_pb2}

This game is defined in~\cite{Hetyei-EKP} on triplets of words 
$(u_1\cdots u_n, v_1\cdots v_n, w_1\cdots w_n)$ for
  $n\geq 0$ such that $1\leq u_i\leq i$, $1\leq v_i\leq i+1$ and 
$1\leq w_i\leq x$ hold for $i\geq 1$, furthermore we require 
$w_i\leq w_{i+1}$ for all $i\leq n-1$. 
A valid move consists of replacing $(u_1\cdots
u_n,v_1\cdots v_n, w_1\cdots w_n)$ with $(u_1\cdots u_m,v_1\cdots v_m,
w_1\cdots w_m)$ for some $m\geq 0$ satisfying
$w_{m+1}=w_{m+2}=\cdots=w_n=x$ and $u_{m+1}< v_j$ for $j=m+1,\ldots, n$.
Theorem~\ref{T_sBt} is applicable to this game, because of the following
isomorphism. 
\begin{lemma}
\label{L_rpb2}
Let $\Lambda={\mathbb P}\times {\mathbb P}\times \{1,\ldots,x\}$ where $x\in
 {\mathbb P}$. The polynomial Bernoulli game, indexed by $x$ is
 isomorphic to the strongly Bernoulli type truncation game, induced by 
$$
M:=\{(u_1,v_1,x)\cdots (u_n,v_n,x)\::\: u_1< v_1, \ldots, v_n\}
$$
on the set of positions
$$
P:=\{(u_1,v_1,w_1)\cdots(u_n,v_n,w_n)\::\: 1\leq u_i\leq i, 1\leq
v_i\leq i+1, w_1\leq\cdots\leq w_n\}.
$$
This isomorphism is given by sending each triplet 
$(u_1\cdots u_n, v_1\cdots v_n, w_1\cdots w_n)\in {\mathbb P}^*\times
{\mathbb P}^* \times \{1,\ldots,x\}^*$ into
$(u_1,v_1,w_1)\cdots (u_n,v_n,w_n)\in ({\mathbb P}\times {\mathbb
  P}\times \{1,\ldots,x\})^*$. 
\end{lemma}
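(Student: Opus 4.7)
The plan is to verify, in order: (i) that the stated map gives a bijection between the positions of the polynomial Bernoulli game and the set $P$, (ii) that $M$ induces a strongly Bernoulli type truncation game for which $P$ is $M$-closed, and (iii) that under the map, valid moves in one game correspond exactly to valid moves in the other. The overall argument parallels the proof of Lemma~\ref{L_oBiso} for the original Bernoulli game, so I would organize the proof around these three checks and only spell out the places where the extra coordinate $w_i$ changes things.

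For (i), the defining constraints on a triplet, namely $1\leq u_i\leq i$, $1\leq v_i\leq i+1$, $1\leq w_i\leq x$, and $w_1\leq\cdots\leq w_n$, are all coordinate-wise conditions, and they are exactly the conditions defining $P$ (with the condition $w_i\leq x$ absorbed into the alphabet $\Lambda$). The inverse map, which reads off the three component words from a single word over $\Lambda$, is obvious.

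For (ii), I would first show that $M$ is closed under taking nonempty initial segments: if $(u_1,v_1,x)\cdots(u_n,v_n,x)\in M$, then for any $1\leq m\leq n$ the conditions $u_1<v_1,\ldots,v_m$ are a subset of the conditions $u_1<v_1,\ldots,v_n$, so the prefix is still in $M$. By the remark following Definition~\ref{D_Btm}, this implies $M$ induces a strongly Bernoulli type truncation game (the Bernoulli property of $M$ is automatic). Next, I would verify that $P$ is $M$-closed: the four defining inequalities of $P$ are all preserved when a suffix is removed, so any prefix of an element of $P$ lies in $P$, regardless of whether the suffix is in $M$.

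For (iii), a move in the polynomial Bernoulli game truncates the triplet at position $m$ precisely when $w_{m+1}=\cdots=w_n=x$ and $u_{m+1}<v_j$ for $j=m+1,\ldots,n$; translated via the map, these two clauses say exactly that the suffix $(u_{m+1},v_{m+1},w_{m+1})\cdots(u_n,v_n,w_n)$ has third coordinates all equal to $x$ and satisfies $u_{m+1}<v_{m+1},\ldots,v_n$, which is the defining membership criterion for $M$. No real obstacle arises here; the only point requiring a moment of care is that the move rule constrains only $u_{m+1}$ (and not $u_{m+2},\ldots,u_n$), which matches the fact that $M$ constrains only the first $u$-coordinate of each of its elements against the full list of $v$-coordinates. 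Once (i)--(iii) are established, the isomorphism of progressively finite games is immediate.
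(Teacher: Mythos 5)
Your proof is correct. The paper states Lemma~\ref{L_rpb2} without any proof, treating it (like Lemma~\ref{L_oBiso}) as a routine verification; your three checks --- the coordinate-wise bijection on positions, the prefix-closure of $M$ (which, as you rightly note, already subsumes the Bernoulli-type condition) together with the $M$-closedness of $P$, and the exact matching of the move rules under the map --- supply precisely the verification the paper leaves implicit, so there is nothing to correct and no divergence of method to report.
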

\begin{theorem}
\label{T_b2p}
The number $\kappa_n$ of kernel positions of rank $n$ in the polynomial
Bernoulli game of the second kind, indexed by $x$ is 
\begin{align*}
\kappa_n=&\sum_{m=0}^{n-1} \binom{x+m-2}{m} m!(m+1)! 
  \sum_{k=0}^{n-m-1}\sum_{m=i_0<i_1<\cdots <i_{k+1}=n}
  \prod_{j=0}^k (i_{j+1}-i_j-1)!^2
    \binom{i_{j+1}}{i_j+1}\binom{i_{j+1}+1}{i_j}\\
&+\binom{x+n-2}{n} n!(n+1)!.
\end{align*}
\end{theorem}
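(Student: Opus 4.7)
The plan is to invoke Lemma~\ref{L_rpb2} to view the polynomial Bernoulli game as the strongly Bernoulli type truncation game $(P,M)$, then apply Theorem~\ref{T_sBt} and count each kernel position by analyzing its unique decomposition into elementary kernel positions. First I would characterize the elementary kernel positions of the truncation game induced by $M$. A single letter $(u,v,w)$ is an elementary kernel position precisely when $(u,v,w)\notin M$, i.e., $w<x$ or $u\geq v$. A word $(u_1,v_1,w_1)\cdots(u_\ell,v_\ell,w_\ell)$ of length $\ell\geq 2$ is an elementary kernel position iff every proper initial segment lies in $M$ while the whole word does not, which forces $w_1=\cdots=w_{\ell-1}=x$, $u_1<v_k$ for $k<\ell$, and either $w_\ell<x$ or $u_1\geq v_\ell$.

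Next I would exploit the monotonicity $w_1\leq\cdots\leq w_n$ built into positions of $P$. Let $m\in\{0,\ldots,n\}$ be the largest index with $w_m<x$ (and $m=0$ if all $w_i=x$). Since the first letter of any length-$\geq 2$ elementary kernel position must satisfy $w=x$, each letter at positions $1,\ldots,m$ must form its own singleton factor; and since $w_{m+1}\geq w_m$ and all subsequent initial letters of factors must have $w=x$, one also gets $w_{m+1}=\cdots=w_n=x$. The positions $m+1,\ldots,n$ then decompose uniquely into elementary kernel factors of types $(i_0+1,i_1),(i_1+1,i_2),\ldots,(i_k+1,i_{k+1})$ where $m=i_0<i_1<\cdots<i_{k+1}=n$, each having all $w$-coordinates equal to $x$.

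I would then count the contributions. The $w$-coordinates of the initial singletons form a weakly increasing sequence in $\{1,\ldots,x-1\}$ of length $m$, giving $\binom{x+m-2}{m}$ choices; the unrestricted $u_i\in\{1,\ldots,i\}$ and $v_i\in\{1,\ldots,i+1\}$ contribute $m!$ and $(m+1)!$ respectively. For an elementary kernel factor of type $(a,b)$ with $b\geq a\geq 1$, an analysis parallel to Lemma~\ref{L_ekfc} conditions on $u_a=u\in\{1,\ldots,a\}$: $v_a,\ldots,v_{b-1}$ must satisfy $u<v_k\leq k+1$, contributing $\prod_{k=a}^{b-1}(k+1-u)=(b-u)!/(a-u)!$; $v_b\in\{1,\ldots,u\}$ contributes $u$; and $u_{a+1},\ldots,u_b$ contribute $b!/a!$. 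Summing over $u$ and rewriting via multiset coefficients as in the proof of Lemma~\ref{L_ekfc}, the inner sum should collapse to the closed form $\kappa(a,b)=(b-a)!^2\binom{b}{a}\binom{b+1}{a-1}$, which one can cross-check against the special cases $\kappa(a,a)=a(a+1)/2$ and $\kappa(1,b)=(b-1)!\,b!$.

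Finally, I would assemble the count: summing $\binom{x+m-2}{m}m!(m+1)!\prod_{j=0}^k\kappa(i_j+1,i_{j+1})$ over $m\in\{0,\ldots,n-1\}$ and admissible chains produces the first summand of the asserted formula; the case $m=n$ (all letters singleton factors with $w_i<x$) is disjoint from the inner sum's index set and contributes the lone term $\binom{x+n-2}{n}n!(n+1)!$. I expect the main obstacle to be verifying the simplification of $\kappa(a,b)$: after substituting $s=a-u$ and $\binom{b-u}{a-u}=\binom{b-a+s}{s}$, one is left to prove the identity $\sum_{s=0}^{a-1}(a-s)\binom{b-a+s}{s}=\binom{b+1}{a-1}$, which I would establish by an absorption/hockey-stick argument analogous to the multiset manipulation performed at the end of the proof of Lemma~\ref{L_ekfc}.
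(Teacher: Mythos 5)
Your proposal is correct and follows essentially the same route as the paper: pass to the truncation game of Lemma~\ref{L_rpb2}, observe that any elementary kernel factor containing a letter with $w<x$ must be a singleton and (by the monotonicity $w_1\leq\cdots\leq w_n$) all such singletons precede the all-$x$ factors, count the singletons by $i(i+1)$ and the weakly increasing $w$-prefix by $\binom{x+m-2}{m}$, and evaluate $\kappa(a,b)=\sum_{u=1}^{a}u\,\frac{b!(b-u)!}{a!(a-u)!}=(b-a)!^2\binom{b}{a}\binom{b+1}{a-1}$ by the same multiset-coefficient (Vandermonde) manipulation used in Lemma~\ref{L_ekfc}. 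The only difference is presentational: you isolate the binomial identity $\sum_{s=0}^{a-1}(a-s)\binom{b-a+s}{s}=\binom{b+1}{a-1}$ explicitly, where the paper simply cites the calculation as ``completely analogous'' to Lemma~\ref{L_ekfc}.
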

\begin{proof}
Consider the isomorphic game $(P,M)$, given in Lemma~\ref{L_rpb2}.
Since in a valid move all truncated letters $(u_j,v_j,w_j)$ satisfy 
$w_j=x$, we have to distinguish two types of elementary kernel factors:
those which contain a letter $(u_i,v_i,w_i)$ with $w_i<x$ and those
which do not. If the elementary kernel factor contains a $(u_i,v_i,w_i)$
with $w_i<x$, it must consist of the single letter $(u_i,v_i,w_i)$.  We
call such a factor an {\em elementary kernel factor of type
  $(i;w_i)$}. Clearly, their number is  
\begin{equation}
\label{E_iw}
\kappa(i;w_i)=i(i+1),
\end{equation}
since $u_i\in \{1,\ldots,i\}$ and $v_i\in \{1,\ldots,
i+1\}$ may be selected independently. The elementary kernel factors containing
only $x$ in their $w$-component of their letters are similar to the ones
considered in Lemma~\ref{L_ekf}. 
We call an elementary kernel factor of type $(i,j;x)$ an
elementary kernel factor $(u_i,v_i,x)\cdots (u_j, v_j, x)$.  
A calculation completely analogous to the one in Lemma~\ref{L_ekfc}
shows that their number is 
\begin{equation}
\label{E_ijx}
\kappa(i,j;x)=\sum_{u=1}^i
u\frac{(j-u)!j!}{(i-u)!i!}=(j-i)!^2\binom{j}{i}\binom{j+1}{i-1}. 
\end{equation}
Because of $w_1\leq \cdots \leq w_n$, the factors of type $(i;w_i)$
must precede the factors of type $(i,j;x)$. Thus we obtain 
\begin{align*}
\kappa_n=&
\sum_{m=0}^{n-1}
\sum_{1\leq w_1\leq \cdots \leq w_m\leq x-1} \prod_{i=1}^m \kappa(i;w_i)
  \sum_{k=0}^{n-m-1}\sum_{m=i_0<i_1<\cdots <i_{k+1}=n}
  \prod_{j=0}^k \kappa(i_j+1,i_{j+1};x)\\
&+\sum_{1\leq w_1\leq \cdots \leq w_n\leq x-1} \prod_{i=1}^n \kappa(i;w_i)
\end{align*}  
The statement now follows from (\ref{E_iw}), (\ref{E_ijx}), from
$\prod_{i=1}^m i(i+1)=m!(m+1)!$, 
and from the fact that the number of words  $w_1\cdots w_m$ satisfying 
$1\leq w_1\leq \cdots \leq w_m\leq x-1$ is 
$$
\left(\binom{x-1}{m}\right)=\binom{x+m-2}{m}.
$$
\end{proof}
We already know~\cite[Theorem 4.2]{Hetyei-EKP} that we also have
$$\kappa_n=(-1)^n(n+1)!b_n(-x)$$
for all positive integer $x$. Since two polynomial functions are equal
if their agree for infinitely many substitutions, we obtain a valid
expansion of the polynomial $(-1)^n(n+1)!b_n(-x)$. 
Substituting $-x$ into $x$ and
rearranging yields the expansion of $b_n(x)$ in the basis
$\{\binom{x+1}{n}\::\: n\geq 0\}$.
\begin{corollary}
\label{C_b2p}
Introducing $c_{n,n}=n!$ and
$$
c_{n,m}=\frac{(-1)^{n-m}m!(m+1)!}{(n+1)!} 
  \sum_{k=0}^{n-m-1}\sum_{m=i_0<i_1<\cdots <i_{k+1}=n}
  \prod_{j=0}^k (i_{j+1}-i_j-1)!^2
    \binom{i_{j+1}}{i_j+1}\binom{i_{j+1}+1}{i_j}
$$
for $0\leq m<n$, we have
$$
b_n(x)=\sum_{m=0}^{n} c_{n,m}\binom{x+1}{m}. 
$$
\end{corollary}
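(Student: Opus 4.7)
The plan is to combine Theorem~\ref{T_b2p} with the previously cited identity $\kappa_n=(-1)^n(n+1)!\,b_n(-x)$ from \cite[Theorem~4.2]{Hetyei-EKP}, which holds for every positive integer $x$. First I would observe that the right-hand side of Theorem~\ref{T_b2p} is a polynomial in $x$ of degree at most $n$, since each $\binom{x+m-2}{m}$ is a polynomial in $x$ of degree $m\leq n$. Because $(-1)^n(n+1)!\,b_n(-x)$ is also a polynomial in $x$ and the two expressions agree at infinitely many positive integer substitutions, the identity $\kappa_n=(-1)^n(n+1)!\,b_n(-x)$ holds as a polynomial identity in $x$.

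Next I would replace $x$ by $-x$ in this polynomial identity, rearranging it to
\[
b_n(x)=\frac{(-1)^n}{(n+1)!}\,\kappa_n(-x),
\]
and substitute $-x$ into the explicit formula from Theorem~\ref{T_b2p}. The only real computation is to re-express each $\binom{-x+m-2}{m}=\binom{m-2-x}{m}$ in the basis $\{\binom{x+1}{m}\}$. Applying the standard upper-negation identity $\binom{-a}{m}=(-1)^m\binom{a+m-1}{m}$ with $a=x-m+2$ yields $\binom{m-2-x}{m}=(-1)^m\binom{x+1}{m}$, and the same identity applied with $m=n$ handles the separate term $\binom{x+n-2}{n}$.

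After this substitution, combining the prefactor $(-1)^n/(n+1)!$ with the $(-1)^m$ produced by each binomial conversion turns the sign in the $m$-th summand into $(-1)^{n+m}=(-1)^{n-m}$, producing exactly the stated coefficients $c_{n,m}$ for $0\leq m<n$. The separated term $\binom{x+n-2}{n}\,n!(n+1)!$ in Theorem~\ref{T_b2p} yields $(-1)^n(n+1)!/(n+1)!\cdot(-1)^n n!\binom{x+1}{n}=n!\binom{x+1}{n}$ after the two factors of $(-1)^n$ cancel, which matches the convention $c_{n,n}=n!$. No step is conceptually deep; the main obstacle is simply careful bookkeeping of the signs and verifying that the stand-alone $m=n$ term in Theorem~\ref{T_b2p} corresponds exactly to the $c_{n,n}\binom{x+1}{n}$ summand in the corollary.
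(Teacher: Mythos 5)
Your proposal is correct and follows essentially the same route as the paper: invoke Theorem~\ref{T_b2p} together with $\kappa_n=(-1)^n(n+1)!\,b_n(-x)$ from \cite[Theorem 4.2]{Hetyei-EKP}, upgrade the agreement at infinitely many positive integers to a polynomial identity, then substitute $-x$ for $x$ and rearrange. The paper leaves the substitution step implicit, whereas you carry out the upper-negation identity $\binom{m-2-x}{m}=(-1)^m\binom{x+1}{m}$ explicitly and check the signs and the $m=n$ term; this bookkeeping is accurate.
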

\begin{example}
For $n=2$, Corollary~\ref{C_b2p} gives
\begin{align*}
b_2(x)=& \frac{0!1!}{3!}\left(1!^2\binom{2}{1}\binom{3}{0}
  +0!^2\binom{1}{1}\binom{2}{0}0!^2\binom{2}{2}\binom{3}{1}\right)
-\frac{1!2!}{3!}\binom{x+1}{1}
  0!^2\binom{2}{2}\binom{3}{1}+\binom{x+1}{2}2!\\ 
=&\frac{5}{6}-(x+1)+(x+1)x=x^2-\frac{1}{6}.
\end{align*}
Thus $b_2(x)/2!=x^2/2-1/12$ which agrees with the formula given
in~\cite[\S 92]{Jordan}.
\end{example}
We may also obtain a new formula for the Bernoulli numbers of the second
kind by substituting $x=0$ into Corollary~\ref{C_b2p}. We obtain
$b_n=c_{n,0}+c_{n,1}$, i.e., 
\begin{equation}
\begin{aligned}
b_n=&\frac{(-1)^{n}}{(n+1)!} 
  \sum_{k=0}^{n-1}\sum_{0=i_0<i_1<\cdots <i_{k+1}=n}
  \prod_{j=0}^k (i_{j+1}-i_j-1)!^2
    \binom{i_{j+1}}{i_j+1}\binom{i_{j+1}+1}{i_j}\\
&+
\frac{(-1)^{n-1}\cdot 2}{(n+1)!} 
  \sum_{k=0}^{n-2}\sum_{1=i_0<i_1<\cdots <i_{k+1}=n}
  \prod_{j=0}^k (i_{j+1}-i_j-1)!^2
    \binom{i_{j+1}}{i_j+1}\binom{i_{j+1}+1}{i_j}
\end{aligned}
\end{equation}
for $n\geq 2$.

\section{The flat Bernoulli game}
\label{s_fB}

This game is defined in~\cite{Hetyei-EKP} on words $u_1\cdots
u_n$ for $n\geq 0$ such that each $u_i\in{\mathbb P}$ satisfies
$1\leq u_i\leq i$. A valid move consists of replacing $u_1\cdots
u_n$ with $u_1\cdots u_m$ if $m\geq 1$ and $u_{m+1}<u_j$ holds for all $j>m+1$.
In analogy to Lemma~\ref{L_oBiso}, we have the following result. 
\begin{lemma}
\label{L_fBiso}
The flat Bernoulli game is isomorphic to the
strongly Bernoulli type truncation game induced by 
$$
M=\{(p_1,u_1)\cdots (p_n,v_n)\::\: p_1\neq 1, u_1<u_2,\ldots, u_n\}, 
$$
on the set of positions 
$$
P=\{(1,u_1)\cdots (n,u_n)\::\: 1\leq u_i\leq i\}\subset
({\mathbb P}^2)^*. 
$$
The isomorphism is given by sending each word 
$u_1\cdots u_n\in {\mathbb P}^*$
into the word  $(1,u_1)(2,u_2)\cdots (n,u_n)\in ({\mathbb P}^2)^*$. 
\end{lemma}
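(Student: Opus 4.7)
The plan is to show that the map $\phi\colon u_1\cdots u_n \mapsto (1,u_1)(2,u_2)\cdots(n,u_n)$ is an isomorphism of directed graphs between the flat Bernoulli game and the game $(P,M)$, which automatically preserves Grundy numbers and hence kernel positions. The verification splits naturally into three parts: (i) $\phi$ is a bijection onto $P$; (ii) the edge relations correspond; and (iii) $M$ induces a strongly Bernoulli type truncation game with $P$ being $M$-closed. The structure mirrors the proof of Lemma~\ref{L_oBiso}, so most of the argument should be routine bookkeeping.

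Part (i) is immediate: $\phi$ maps each position of the flat Bernoulli game into $P$ by definition, and it has an obvious inverse that simply forgets the first coordinate $i$ of each letter pair; both source and target include the empty word. For (ii), a valid move $u_1\cdots u_n \to u_1\cdots u_m$ in the flat Bernoulli game requires $m\geq 1$ and $u_{m+1}<u_j$ for every $j>m+1$. The image under $\phi$ removes the suffix $(m+1,u_{m+1})\cdots(n,u_n)$, which lies in $M$ precisely when its leading coordinate satisfies $m+1\neq 1$ (equivalent to $m\geq 1$) and $u_{m+1}<u_{m+2},\ldots,u_n$. These are exactly the constraints defining a valid move, and conversely every valid truncation of a word in $P$ arises in this way.

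For (iii), closure of $M$ under nonempty initial segments is transparent: restricting to an initial segment preserves both the condition $p_1\neq 1$ and each inequality $u_1<u_j$ for the remaining indices. This closure immediately yields the Bernoulli property of Definition~\ref{D_Btm}. Finally, $P$ is $M$-closed because a valid move always leaves an initial segment $(1,u_1)\cdots(m,u_m)$ with $m\geq 1$, whose first coordinates are still $1,2,\ldots,m$ in order and whose second coordinates satisfy $1\leq u_i\leq i$.

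The only point requiring a moment of care, which I would flag as the minor obstacle, is the asymmetric role of the first letter: the flat Bernoulli game never allows $u_1$ to be removed, and the constraint $p_1\neq 1$ in $M$ is precisely what encodes this prohibition on the truncation side. Once that correspondence is recognized (as in the parallel proof of Lemma~\ref{L_oBiso}), the three verifications above proceed without incident.
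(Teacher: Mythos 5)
Your proof is correct, and since the paper states Lemma~\ref{L_fBiso} without proof (offering it only ``in analogy to Lemma~\ref{L_oBiso}''), your three-part verification --- bijectivity of $\phi$, correspondence of valid moves, and the strong Bernoulli property of $M$ together with $M$-closedness of $P$ --- is exactly the routine check the paper intends the reader to supply. Your observation that the condition $p_1\neq 1$ encodes the prohibition on removing the first letter is precisely the ``minor obstacle'' the paper itself flags in the parallel discussion preceding Lemma~\ref{L_oBiso}.
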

\begin{theorem}
\label{T_fB}
For $n\geq 2$, the number $\kappa_n$ of kernel positions of rank $n$ in
the flat Bernoulli game is 
$$
\kappa_n=\sum_{k=0}^{\lfloor(n-3)/2\rfloor}
\sum_{\substack{1=i_0<i_1<\cdots<i_{k+1}=n\\ i_{j+1}-i_j\geq 2}}
\prod_{j=0}^k (i_{j+1}-i_j-2)!\binom{i_{j+1}}{i_j}.
$$
\end{theorem}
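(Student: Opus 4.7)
The plan is to follow the strategy used in Sections~\ref{s_ob2} and~\ref{s_MR}: transfer to the isomorphic strongly Bernoulli type truncation game given in Lemma~\ref{L_fBiso}, apply Theorem~\ref{T_sBt} to obtain a unique decomposition of each kernel position into elementary kernel factors, count the factors of each type, and sum over all admissible compositions.

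First I would identify which types of elementary kernel factors can arise. The first letter of any position in $P$ is forced to be $(1,1)$, which cannot be truncated because its first coordinate equals $1$; hence it is the unique elementary kernel factor of type $(1,1)$ and contributes $\kappa(1,1)=1$. For $i\geq 2$ the defining condition ``$u_1<u_2,\ldots,u_n$'' in $M$ is vacuously satisfied by a single letter $(i,u_i)$, so such a singleton lies in $M$ and cannot be an elementary kernel position. Consequently every non-initial elementary kernel factor has length at least $2$, which explains the constraint $i_{j+1}-i_j\geq 2$ in the statement and the bound $k\leq\lfloor(n-3)/2\rfloor$, since $k+1$ factors of length at least $2$ must together cover the $n-1$ positions $2,\ldots,n$.

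Next I would characterize and count the elementary kernel factors of type $(i,j)$ with $i\geq 2$ and $j\geq i+1$. By Definition~\ref{D_ekp}, such a factor $(i,u_i)\cdots(j,u_j)$ is determined by the conditions $1\leq u_k\leq k$ for each $k$, $u_i<u_k$ for each $k\in\{i+1,\ldots,j-1\}$ (so every proper prefix lies in $M$), and $u_i\geq u_j$ (so the whole word does not). A crucial observation is that, in contrast with the original Bernoulli game, the middle letters $u_{i+1},\ldots,u_{j-1}$ need not be mutually increasing --- each merely has to exceed $u_i$. Fixing $u_i=u$, each middle letter $u_k$ can then be chosen independently in $k-u$ ways, and $u_j$ in $u$ ways, giving
$$\kappa(i,j)=\sum_{u=1}^i u\prod_{k=i+1}^{j-1}(k-u)=(j-i-1)!\sum_{u=1}^i u\binom{j-1-u}{j-i-1}.$$
The main technical step is to reduce this to $\kappa(i,j)=(j-i-1)!\binom{j}{i-1}$, which is equivalent to the binomial identity $\sum_{u=1}^i u\binom{j-1-u}{j-i-1}=\binom{j}{i-1}$. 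I would prove this by substituting $v=j-1-u$ and applying the identity $v\binom{v}{m}=m\binom{v}{m}+(m+1)\binom{v}{m+1}$ together with two instances of the hockey stick identity.

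Once this simplification is in hand, Theorem~\ref{T_sBt} yields that every kernel position of rank $n$ decomposes uniquely as the initial factor $(1,1)$ followed by elementary kernel factors of types $(i_0+1,i_1),(i_1+1,i_2),\ldots,(i_k+1,i_{k+1})$, with $1=i_0<i_1<\cdots<i_{k+1}=n$ and each $i_{j+1}-i_j\geq 2$. Since these factors may be chosen independently, summing the product of the counts $\kappa(i_j+1,i_{j+1})=(i_{j+1}-i_j-2)!\binom{i_{j+1}}{i_j}$ over all admissible compositions yields the claimed formula. The overall structure of the argument is parallel to the proofs of Theorems~\ref{T_b2k} and~\ref{T_MRk}; the binomial manipulation required to close the count of elementary kernel factors is the one step that calls for substantive work.
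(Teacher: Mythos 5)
Your proposal is correct and follows essentially the same route as the paper: pass to the isomorphic game of Lemma~\ref{L_fBiso}, apply Theorem~\ref{T_sBt}, note that for $i\geq 2$ a singleton letter lies in $M$ so every non-initial elementary kernel factor has length at least $2$, and compute $\kappa(i,j)=\sum_{u=1}^i u\,(j-1-u)!/(i-u)!=(j-i-1)!\binom{j}{i-1}$ exactly as the paper does (by a calculation analogous to Lemma~\ref{L_ekfc}). The only difference is cosmetic: you prove the closing binomial identity by a substitution and the hockey-stick identity, where the paper's model argument uses multiset coefficients; both are valid.
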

\begin{proof}
Consider isomorphic representation given in Lemma~\ref{L_fBiso}. 
Note first that, in any
kernel position $(1,u_1)\cdots (n,u_n)$, the letter $u_1=(1,1)$ is
an elementary kernel factor of type $(1,1)$ and we have $\kappa(1,1)=1$. 
For $2\leq i< j$, let $\kappa(i,j)$ be the number of elementary 
kernel factors $(i,u_i)\cdots (j,u_j)$ of type
$(i,j)$. A calculation completely analogous to the one in Lemma~\ref{L_ekfc}
shows
\begin{equation}
\label{E_ijf}
\kappa(i,j)=\sum_{u=1}^i u \frac{(j-1-u)!}{(j-1-i)!}=(j-1-i)!\binom{j}{i-1}.
\end{equation}
Note that for $i\geq 2$ there is no elementary kernel factor of type
$(i,i)$ since removing the last letter only is always a valid move,
provided at least one letter is left. The statement now follows from 
Equation (\ref{E_ijf}) and the obvious formula 
$$
\kappa_n=\kappa(1,1)\cdot \sum_{k=0}^{\lfloor(n-3)/2\rfloor}
\sum_{\substack{1=i_0<i_1<\cdots<i_{k+1}=n\\ i_{j+1}-i_j\geq 2}}
\prod_{j=0}^k \kappa(i_j+1,i_{j+1}).
$$
\end{proof}
Introducing $m_j:=i_{j}-i_{j-1}-1$ for $j\geq 1$ and shifting the index
$k$ by $1$, we may rewrite the equation in Theorem~\ref{T_fB} as 
\begin{equation}
\label{E_fB} 
\kappa_n=n\cdot \sum_{k=1}^{\lfloor(n-1)/2\rfloor}
\sum_{\substack{m_1+\cdots + m_{k}=n-1\\ m_1,\ldots, m_k\geq 2}}
\binom{n-1}{m_1, \ldots, m_k} (m_1-2)!\cdots (m_k-2)!.
\end{equation} 
A more direct proof of this equation follows from
Corollary~\ref{C_fBperm} below. 
\begin{example}
For $n=5$, (\ref{E_fB}) yields
$$
\kappa_5=5\left(\binom{4}{4} 2!+\binom{4}{2,2}0!0!\right)=40.
$$
Thus $\kappa_5/5!=1/3$ which agrees with the number given in~\cite[Table
  1]{Hetyei-EKP}. 
\end{example}
We already know~\cite[Proposition 7.3]{Hetyei-EKP} that the exponential
generating function of the numbers $\kappa_n$ is
\begin{equation}
\label{E_fBgen}
\sum_{n=1}^{\infty}\frac{\kappa_n}{n!} t^n
=\frac{t}{(1-t)(1-\ln(1-t))}.
\end{equation}
Just like in Section~\ref{s_MR}, we may use place-based non-inversion
tables to find a permutation enumeration model for
for the numbers $\kappa_n$.
\begin{lemma}
\label{L_PNT<}
Let $u_1\cdots u_n$ be the PNT of a permutation $\pi\in S_n$. 
Then, for all $i<j$, $\pi(i)<\pi(j)$ implies $u_i<u_j$.
The following partial converse is also true: $u_i<u_{i+1},\ldots, u_j$
implies $\pi(i)<\pi(i+1),\ldots, \pi(j)$.  
\end{lemma}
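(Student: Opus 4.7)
The plan is to attack both directions directly from the defining formula $u_j=1+|\{k<j:\pi(k)<\pi(j)\}|$, using the set $S_j:=\{k<j:\pi(k)<\pi(j)\}$ so that $u_j=1+|S_j|$.

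For the forward implication, I would fix $i<j$ with $\pi(i)<\pi(j)$ and exhibit an explicit inclusion $S_i\cup\{i\}\subseteq S_j$ which is automatically strict because $i\notin S_i$. The inclusion of the singleton $\{i\}$ is just the hypothesis $\pi(i)<\pi(j)$ together with $i<j$. For the containment $S_i\subseteq S_j$, any $k\in S_i$ satisfies $k<i<j$ and $\pi(k)<\pi(i)<\pi(j)$, so $k\in S_j$. Counting yields $u_i\le u_j-1<u_j$.

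For the partial converse, I would argue by contradiction. Assume $u_i<u_l$ for every $l\in\{i+1,\ldots,j\}$ but that $\pi(i)>\pi(l)$ for some such $l$, and let $l$ be the smallest index in $(i,j]$ for which this failure of $\pi(i)<\pi(l)$ occurs. By minimality, $\pi(i)<\pi(k)$ for every $k$ with $i<k<l$; combined with $\pi(i)>\pi(l)$ this gives $\pi(k)>\pi(l)$, so no such $k$ lies in $S_l$. Moreover $i\notin S_l$ since $\pi(i)>\pi(l)$. Finally, any $k<i$ with $k\in S_l$ satisfies $\pi(k)<\pi(l)<\pi(i)$, hence $k\in S_i$. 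Combining these three observations gives $S_l\subseteq S_i$, so $u_l\le u_i$, contradicting the hypothesis $u_i<u_l$.

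The main obstacle is the converse direction: one has to resist trying to deduce $\pi(i)<\pi(l)$ one step at a time, since the statement only postulates that $u_i$ is strictly less than each subsequent $u_{i+1},\ldots,u_j$, not that the sequence is monotone. The minimal-counterexample trick is what makes the three contributions to $S_l$ collapse cleanly into a subset of $S_i$, and identifying this is the only nontrivial idea in the proof.
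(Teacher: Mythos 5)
Your proof is correct and follows essentially the same route as the paper: the forward direction via the strict containment of the sets $\{k<i:\pi(k)<\pi(i)\}\subsetneq\{k<j:\pi(k)<\pi(j)\}$, and the converse via the same three-part containment argument showing $S_l\subseteq S_i$; your minimal-counterexample framing is just a repackaging of the paper's induction on $j-i$.
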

\begin{proof}
If $\pi(i)<\pi(j)$ then the set $\{k<i\::\: \pi(k)<\pi(i)\}$ is a proper
subset of $\{k<j\::\: \pi(k)<\pi(j)\}$ (the index $i$ belongs only to
the second subset). Thus $u_i<u_j$. The converse may be shown by
induction on $j-i$. For $j=i+1$, $\pi(i)>\pi(i+1)$ implies that the set 
 $\{k<i+1\::\: \pi(k)<\pi(i+1)\}$ is a subset of $\{k<i\::\:
\pi(k)<\pi(i)\}$, thus $u_i\geq u_{i+1}$. Therefore $u_i<u_{i+1}$
implies $\pi(i)<\pi(i+1)$. Assume now that $u_i\leq u_{i+1},\ldots, u_j$
holds and that we have already shown $\pi(i)<\pi(i+1),\ldots,
\pi(j-1)$. Assume, by way of contradiction, that $\pi(i)>\pi(j)$
holds. Then there is no $k$ satisfying $i<k<j$ and $\pi(k)<\pi(j)$ thus
$\{k<j\::\: \pi(k)<\pi(j)\}$ is a subset of $\{k<i\::\:
\pi(k)<\pi(i)\}$, implying $u_i\geq u_j$, a contradiction. Therefore  we
obtain $\pi(i)<\pi(j)$.   
\end{proof}
\begin{corollary}
Let $u_1\cdots u_n$ be the PNT of a permutation $\pi\in S_n$. Then 
$u_i\cdots u_j$ satisfies $u_i<u_{i+1},\ldots, u_{j-1}$ and $u_i\geq u_j$
if and only if $\pi(j)<\pi(i)< \pi(i+1),\ldots,\pi(j-1)$ holds.
\end{corollary}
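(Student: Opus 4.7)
The plan is to derive this corollary as a direct consequence of Lemma~\ref{L_PNT<}, combining its two halves asymmetrically in the two directions. No new ideas beyond the lemma itself are needed; the only subtlety is matching the orientation of each implication with the right half of the lemma.

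For the forward direction, I would start from the hypotheses $u_i<u_{i+1},\ldots,u_{j-1}$ and $u_i\geq u_j$. First, apply the partial converse portion of Lemma~\ref{L_PNT<} with indices $i$ through $j-1$ to obtain $\pi(i)<\pi(i+1),\ldots,\pi(j-1)$. For the remaining inequality $\pi(j)<\pi(i)$, invoke the contrapositive of the first half of the lemma applied to the pair $(i,j)$: since $\pi(i)<\pi(j)$ would force $u_i<u_j$, the hypothesis $u_i\geq u_j$ yields $\pi(i)\geq \pi(j)$, and strictness follows from $\pi$ being a bijection.

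For the backward direction, assume $\pi(j)<\pi(i)<\pi(i+1),\ldots,\pi(j-1)$. The chain $u_i<u_{i+1},\ldots,u_{j-1}$ is immediate by applying the first half of Lemma~\ref{L_PNT<} to each pair $(i,k)$ with $i<k\leq j-1$. The only nontrivial step is to deduce $u_i\geq u_j$. I would handle this by contradiction: if instead $u_i<u_j$, then the extended chain $u_i<u_{i+1},\ldots,u_{j-1},u_j$ would hold, and the partial converse part of Lemma~\ref{L_PNT<} would then yield $\pi(i)<\pi(i+1),\ldots,\pi(j)$, contradicting the assumed $\pi(j)<\pi(i)$.

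The main ``obstacle,'' such as it is, lies in the backward direction: one must resist the temptation to try to establish $u_i\geq u_j$ from $\pi(j)<\pi(i)$ by a direct application of the first half of the lemma, since that half is only a one-way implication with the wrong orientation for this purpose. Routing the argument through the partial converse via a proof by contradiction bypasses this. Once that observation is made, the entire proof should fit comfortably in a few lines.
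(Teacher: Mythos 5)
Your proposal is correct and matches the paper's intent: the paper states this corollary with no explicit proof, treating it as an immediate consequence of Lemma~\ref{L_PNT<}, and your argument (partial converse for the chain, contrapositive of the first half plus injectivity for $\pi(j)<\pi(i)$, and a short contradiction via the partial converse for $u_i\geq u_j$) is exactly the intended derivation.
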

\begin{corollary}
\label{C_fBperm}
Let $u_1\cdots u_n$ be the PNT of a permutation $\pi\in S_n$. Then
$u_1\cdots u_n$ is a kernel position in the flat Bernoulli game, if and
only if there exists a set of indices $1=i_0<i_1<\cdots<i_{k+1}=n$ such
that for each $j\in\{0,\ldots,k\}$ we have
$\pi(i_{j+1})<\pi(i_j+1)<\pi(i_j+2), \pi(i_j+3),\ldots,\pi(i_{j+1}-1)$.
\end{corollary}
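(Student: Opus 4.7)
The plan is to deduce the corollary by applying the unique-decomposition Theorem \ref{T_sBt} to the truncation-game representation of Lemma \ref{L_fBiso}, identifying the elementary kernel factors explicitly in terms of the PNT entries $u_i$, and finally converting those conditions into conditions on $\pi$ via the corollary stated immediately before \ref{C_fBperm}.

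First I would pin down the shape of an elementary kernel factor in the game $(P,M)$ of Lemma \ref{L_fBiso}. As already observed in the proof of Theorem \ref{T_fB}, the opening letter $(1,1)$ is forced and is itself the unique elementary kernel factor of type $(1,1)$, and every subsequent elementary kernel factor $(i,u_i)\cdots(j,u_j)$ must satisfy $2 \leq i < j$. Unwinding Definition \ref{D_ekp}, the requirement $(i,u_i)\cdots(m,u_m) \in M$ for every $i \leq m < j$ reduces, because the leading tag $i$ is $\neq 1$, to the monotonicity condition $u_i < u_{i+1},\ldots, u_{j-1}$; combined with failure of membership at the last step, $(i,u_i)\cdots(j,u_j)\notin M$, this is equivalent to $u_i < u_{i+1}, \ldots, u_{j-1}$ together with $u_i \geq u_j$.

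Next I would invoke the preceding corollary, which says exactly that the pattern $u_i < u_{i+1},\ldots, u_{j-1}$ and $u_i \geq u_j$ is equivalent to $\pi(j) < \pi(i) < \pi(i+1), \pi(i+2), \ldots, \pi(j-1)$. Substituting $i \leftarrow i_j+1$ and $j \leftarrow i_{j+1}$ for each factor $u_{i_j+1}\cdots u_{i_{j+1}}$ of the unique decomposition guaranteed by Theorem \ref{T_sBt} produces exactly the displayed chain of inequalities on $\pi$. Conversely, any sequence $1 = i_0 < i_1 < \cdots < i_{k+1} = n$ satisfying those inequalities rebuilds, through the same corollary, a valid concatenation of elementary kernel factors, and Theorem \ref{T_sBt} then ensures that $u_1\cdots u_n$ is a kernel position. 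This gives both directions simultaneously.

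The main obstacle is essentially bookkeeping rather than any conceptual difficulty, so I would just double-check the role of the singleton initial factor $u_1=1$, which contributes no inequality on its own but fixes $i_0=1$ as the leftmost boundary so that the first nontrivial block begins at position $2$. The small cases $n=1$ (only $i_0 = i_{k+1} = 1$ is allowed, making $u_1$ vacuously a kernel position) and $n=2$ (the required $\pi(i_1)<\pi(i_0+1)$ forces $\pi(2)<\pi(2)$, so no kernel positions, consistent with the fact that removing the last letter is always a valid move) are both handled uniformly by the statement and need no separate argument.
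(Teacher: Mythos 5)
Your proposal is correct and follows exactly the route the paper intends: Lemma~\ref{L_fBiso} plus Theorem~\ref{T_sBt} to reduce to a concatenation of elementary kernel factors, the explicit description of those factors (as worked out in the proof of Theorem~\ref{T_fB}) as $u_i<u_{i+1},\ldots,u_{j-1}$ with $u_i\geq u_j$, and the corollary immediately preceding~\ref{C_fBperm} to translate this into the stated inequalities on $\pi$. The paper leaves this chain implicit rather than writing it out, and your version, including the treatment of the forced initial singleton factor and the degenerate small cases, fills it in accurately.
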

Equation (\ref{E_fB}) also follows from Corollary~\ref{C_fBperm}.
In fact, there are $n$ ways to select $\pi(1)$. Then, introducing 
$m_j:=i_{j}-i_{j-1}-1$ for $j\geq 1$, we have
$\binom{n-1}{m_1,\ldots,m_k}$ ways to select the partitioning
$$
\{1,\ldots,n\}\setminus \pi(1)=\biguplus_{j=0}^k
\pi\left(\{i_j+1,\ldots,i_{j+1}\}\right) 
$$
and, for each $j$ there are $(i_{j+1}-i_j-2)!=(m_j-2)!$ ways to select
the partial permutation $\pi(i_j+1)\cdots\pi(i_{j+1})$. Both Equation
(\ref{E_fB}) and Corollary~\ref{C_fBperm} suggest looking at the numbers 
\begin{equation}
\label{E_K}
K_n=\kappa_{n+1}/(n+1)=
\sum_{k=1}^{\lfloor n/2\rfloor}
\sum_{\substack{m_1+\cdots + m_{k}=n\\ m_1,\ldots, m_k\geq 2}}
\binom{n}{m_1, \ldots, m_k} (m_1-2)!\cdots (m_k-2)!\quad\mbox{for $n\geq
  0$}. 
\end{equation}
It is easy to check the following statement.
\begin{proposition}
$K_n$ is the number of kernel positions of rank $n$ in the {\em exception-free}
  variant of the flat Bernoulli game, where removing the entire word if
  $u_1<u_2,\ldots, u_n$ is also a valid move, and the empty word is a
  valid position.
\end{proposition}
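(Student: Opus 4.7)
The plan is to mimic the proof of Theorem~\ref{T_fB}, adapting it to the exception-free variant. First one observes that sending $u_1\cdots u_n$ to $(1,u_1)(2,u_2)\cdots (n,u_n)$ embeds the exception-free game into the strongly Bernoulli type truncation game induced on $({\mathbb P}^2)^*$ by
$$M=\{(p_1,u_1)\cdots (p_n,u_n)\::\: u_1<u_2,\ldots,u_n\}$$
on the set of positions $P=\{(1,u_1)\cdots (n,u_n)\::\: 1\leq u_i\leq i\}\cup\{\varepsilon\}$. The only change compared with Lemma~\ref{L_fBiso} is that the restriction $p_1\neq 1$ is dropped, allowing the first letter to be removed and the empty word to become a valid position; in particular, every single letter now belongs to $M$ vacuously.

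Next, Theorem~\ref{T_sBt} is applied to count kernel positions via the unique decomposition into elementary kernel factors. Because every single letter belongs to $M$, there are no elementary kernel factors of type $(i,i)$, in contrast with the original flat Bernoulli game, where $(1,1)$ gave such a factor. For $j>i\geq 1$, the requirement that all proper prefixes lie in $M$ while the whole factor does not translates, exactly as in the proof of Theorem~\ref{T_fB}, to $u_i<u_{i+1},\ldots,u_{j-1}$ and $u_i\geq u_j$. Counting by fixing $u_i=u$ gives
$$\kappa_{\mathrm{ef}}(i,j)=\sum_{u=1}^{i}u\cdot\frac{(j-1-u)!}{(i-u)!}=(j-i-1)!\binom{j}{i-1},$$
a formula which now holds uniformly for all $i\geq 1$, including the new case $i=1$ (where $u_1=1$ forces $u_j=1$ and leaves $(j-2)!$ ways to choose the intermediate letters).

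Finally, the count is assembled. Each kernel position of rank $n\geq 1$ decomposes uniquely into $l$ elementary kernel factors whose lengths $m_1,\ldots,m_l\geq 2$ sum to $n$. Writing $i_0=0$ and $i_j=m_1+\cdots+m_j$, the $j$-th factor has type $(i_{j-1}+1,i_j)$ and contributes $(m_j-2)!\binom{i_j}{i_{j-1}}$. The product of binomials telescopes,
$$\prod_{j=1}^l\binom{i_j}{i_{j-1}}=\prod_{j=1}^l\frac{i_j!}{i_{j-1}!\,m_j!}=\frac{n!}{m_1!\cdots m_l!}=\binom{n}{m_1,\ldots,m_l},$$
and summation over all choices of $l$ and $(m_1,\ldots,m_l)$ yields exactly formula~(\ref{E_K}). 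The only real obstacle is the bookkeeping surrounding the new $i=1$ case of the elementary-factor count, together with the trivial convention that the empty word is the unique kernel position of rank $0$; neither presents substantive difficulty.
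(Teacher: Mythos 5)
Your proof is correct and is essentially the route the paper intends: the paper dismisses this proposition with ``it is easy to check,'' the check being exactly your adaptation of the proof of Theorem~\ref{T_fB} (drop the condition $p_1\neq 1$ from $M$ in Lemma~\ref{L_fBiso}, recount the elementary kernel factors --- now with no type-$(i,i)$ factors and with $i=1$ allowed --- and telescope the binomial coefficients to recover formula~(\ref{E_K})). As a minor aside, your denominator $(i-u)!$ in the intermediate sum for $\kappa_{\mathrm{ef}}(i,j)$ is the correct one, consistent with the closed form $(j-i-1)!\binom{j}{i-1}$.
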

Corollary~\ref{C_fBperm} may be rephrased as follows.
\begin{corollary}
\label{C_fBpermK}
$K_n$ is the number of those permutations $\pi\in S_n$ for which 
there exists a set of indices $0=i_0<i_1<\cdots<i_{k+1}=n$ such
that for each $j\in\{0,\ldots,k\}$ we have
$\pi(i_{j+1})<\pi(i_j+1)<\pi(i_j+2), \pi(i_j+3),\ldots,\pi(i_{j+1}-1)$.
\end{corollary}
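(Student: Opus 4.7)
The plan is to combine the preceding Proposition with a direct analysis of the strongly Bernoulli decomposition applied to the exception-free variant. By that Proposition, $K_n$ counts the kernel positions of rank $n$ in the exception-free variant of the flat Bernoulli game, so it suffices to translate ``kernel position'' into the stated condition on $\pi$.

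First I would observe that the exception-free variant is isomorphic, exactly as in Lemma~\ref{L_fBiso}, to a strongly Bernoulli type truncation game on $({\mathbb P}^2)^*$, but induced this time by
\[
M'=\{(p_1,u_1)\cdots(p_n,u_n)\::\: u_1<u_2,\ldots,u_n\},
\]
with the restriction $p_1\neq 1$ dropped (reflecting the fact that removing the whole word is now permitted). Both the Bernoulli property and closure under nonempty initial segments are inherited from $M$.

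Next I would apply Theorem~\ref{T_sBt} and read off the elementary kernel factors of this variant. A single letter $(i,u_i)$ always lies in $M'$ (the defining inequality is vacuous for length $1$), so no singleton is an elementary kernel position; consequently every elementary kernel factor has length at least two. For $j>i$, Definition~\ref{D_ekp} then says that $(i,u_i)\cdots(j,u_j)$ is an elementary kernel factor of type $(i,j)$ if and only if every proper initial segment lies in $M'$, i.e.\ $u_i<u_{i+1},\ldots,u_{j-1}$, while the full word fails to, i.e.\ $u_i\geq u_j$. Theorem~\ref{T_sBt} thus gives that $u_1\cdots u_n$ is a kernel position of the exception-free variant exactly when there exist indices $0=i_0<i_1<\cdots<i_{k+1}=n$ with $i_{j+1}-i_j\geq 2$ and, for each $j$, $u_{i_j+1}<u_{i_j+2},\ldots,u_{i_{j+1}-1}$ together with $u_{i_j+1}\geq u_{i_{j+1}}$.

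Finally I would translate the $u$-inequalities on each block into $\pi$-inequalities using the corollary stated immediately before Corollary~\ref{C_fBperm}: on each block this is precisely $\pi(i_{j+1})<\pi(i_j+1)<\pi(i_j+2),\ldots,\pi(i_{j+1}-1)$, which is exactly the statement of Corollary~\ref{C_fBpermK}. The only subtlety, and hence the main point requiring care, is the left boundary: in the original flat Bernoulli game the letter $(1,u_1)$ was a forced elementary factor of type $(1,1)$ (since the first letter could not be truncated), whereas in the exception-free variant the letter $u_1$ belongs to the first genuine block. This is precisely why the index range in Corollary~\ref{C_fBpermK} starts at $i_0=0$ instead of $i_0=1$ as in Corollary~\ref{C_fBperm}, and it is what one must verify carefully when applying Theorem~\ref{T_sBt} to the enlarged $M'$.
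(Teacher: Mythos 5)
Your proposal is correct and follows essentially the route the paper intends: the paper merely declares Corollary~\ref{C_fBpermK} to be Corollary~\ref{C_fBperm} ``rephrased'' via the preceding Proposition, and the underlying justification is exactly what you spell out --- the exception-free variant is a strongly Bernoulli type truncation game with the $p_1\neq 1$ restriction dropped, Theorem~\ref{T_sBt} gives the block decomposition with $i_0=0$, and the unnamed corollary after Lemma~\ref{L_PNT<} translates each block condition into the stated inequalities on $\pi$. Your explicit attention to the left boundary (the first letter now belonging to a genuine block rather than being a forced singleton factor) is precisely the detail the paper leaves implicit.
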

The generating function of the numbers $K_n$ is 
\begin{equation}
\label{E_Kgen}
\sum_{n=0}^{\infty} \frac{K_n}{n!} t^n
=\frac{1}{(1-t)(1-\ln(1-t))}.
\end{equation}
This formula may be derived not only from $K_n=\kappa_{n+1}/(n+1)$
and (\ref{E_fBgen}), but also from from Corollary~\ref{C_fBpermK} and the 
compositional formula for exponential generating
functions~\cite[Thm.\ 5.5.4]{Stanley-EC2}. We only need to observe that 
$$
\frac{1}{(1-t)(1-\ln(1-t))}=\frac{1}{1-t}\circ \left(t+(1-t)\ln(1-t)\right),
$$
where   
$$
\frac{1}{1-t}=\sum_{n=0}^{\infty} \frac{n! t^n}{n!}
$$
is the exponential generating function of linear orders, whereas
$$
t+(1-t)\ln(1-t)=-t\ln(1-t)-(-\ln(1-t-t))
=\sum_{n=1}^{\infty}\frac{t^{n+1}}{n}-\sum_{n=2}^{\infty}\frac{t^{n}}{n}
=\sum_{n=2}^{\infty}\frac{(n-2)! t^{n}}{n!}
$$
is the exponential generating function of linear orders of
$\{1,\ldots,n\}$, listing $1$ last and $2$ first.

By taking the antiderivative on both sides of (\ref{E_Kgen}) we obtain
$$
\sum_{n=0}^\infty \frac{K_n}{(n+1)!} t^{n+1}=\int \frac{1}{(1-t)(1-\ln(1-t))}\
dt=\ln(1-\ln(1-t))+K_{-1}.
$$
Introducing $K_{-1}:=0$, the numbers $K_{-1}, K_0,K_1,\ldots$ are listed
as sequence A089064 in the On-Line Encyclopedia of Integer
Sequences~\cite{OEIS}. There we may also find the formula 
\begin{equation}
\label{E_st}
K_n=(-1)^{n}\sum_{k=1}^{n+1}  s(n+1,k)\cdot (k-1)!
\end{equation}
expressing them in terms of the Stirling numbers of the first kind.
Using the well-known formulas 
$$
\sum_{k=1}^n s(n+1,k) x^k=x(x-1)\cdots (x-n)
\quad
\mbox{and} 
\quad
n!=\int_0^{\infty} x^n e^{-x}\ dx,
$$
Equation (\ref{E_st}) is equivalent to
\begin{equation}
\label{E_Kint}
K_n=(-1)^{n}\int_0^{\infty} (x-1)\cdots(x-n) e^{-x}\ dx.
\end{equation}
This formula may be directly verified by substituting it into the left
hand side of (\ref{E_Kgen}) and obtaining
$$
\int_{0}^{\infty} e^{-x}
\sum_{n=0}^{\infty}\binom{x-1}{n}(-t)^n\ dx
=\int_{0}^{\infty} e^{-x} (1-t)^{x-1}\ dx
=\frac{1}{(1-t)(1-\ln(1-t))}.
$$
We conclude this section with an intriguing conjecture.
By inspection of (\ref{E_fBgen}) and (\ref{E_Kgen}) we obtain the
following formula. 
\begin{lemma}
For $n\geq 1$, 
$$a_{n}:=(-1)^{n}\frac{(\kappa_{n+1}-(n+1)\cdot \kappa_{n})}{n+1}
=(-1)^{n}\left(K_n-n\cdot K_{n-1}\right)
$$
is the coefficient of $t^{n}/n!$ in $1/(1-\ln(1+t))$. 
\end{lemma}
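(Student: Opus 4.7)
The plan is to reduce the claim to a direct generating function manipulation, using the two generating functions already computed in the paper. Set
$F(t) := \sum_{n\geq 0} \frac{K_n}{n!}t^n = \dfrac{1}{(1-t)(1-\ln(1-t))}$, which is (\ref{E_Kgen}), and note that $K_0 = 1$ from inspection of the constant term.

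First I will compute the ordinary linear combination before worrying about the sign. Since $\sum_{n\geq 1} \frac{nK_{n-1}}{n!}t^n = t\sum_{n\geq 1} \frac{K_{n-1}}{(n-1)!}t^{n-1} = tF(t)$, the sequence $K_n - nK_{n-1}$ (for $n\geq 1$) has exponential generating function
\[
\sum_{n\geq 1}\frac{K_n - nK_{n-1}}{n!}t^n = F(t) - K_0 - tF(t) = (1-t)F(t) - 1 = \frac{1}{1-\ln(1-t)} - 1,
\]
where the last equality is immediate from the explicit form of $F$.

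Next I will introduce the alternating sign by the substitution $t \mapsto -t$. This yields
\[
\sum_{n\geq 1}\frac{(-1)^n(K_n - nK_{n-1})}{n!}t^n = \frac{1}{1-\ln(1+t)} - 1,
\]
and since subtracting the constant $1$ does not affect any coefficient of $t^n/n!$ for $n\geq 1$, the number $a_n = (-1)^n(K_n - nK_{n-1})$ is exactly the coefficient of $t^n/n!$ in $1/(1-\ln(1+t))$, as claimed. The equality $a_n = (-1)^n(\kappa_{n+1}-(n+1)\kappa_n)/(n+1)$ is just the rewriting via $K_n = \kappa_{n+1}/(n+1)$ already noted in (\ref{E_K}).

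There is essentially no obstacle here: once (\ref{E_Kgen}) is in hand, the identification is a one-line manipulation of formal power series, and the only thing to watch is the bookkeeping of the constant term and the factor $(-1)^n$ produced by the substitution $t\mapsto -t$. The only place where a reader might hesitate is in checking that $K_0 = 1$ (needed to subtract the correct constant), which follows by evaluating $F(0)$.
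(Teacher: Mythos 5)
Your proof is correct and is essentially the same argument the paper intends: the paper disposes of this lemma with the single phrase ``by inspection of (\ref{E_fBgen}) and (\ref{E_Kgen}),'' and your computation $(1-t)F(t)-1=\tfrac{1}{1-\ln(1-t)}-1$ followed by $t\mapsto -t$ is precisely that inspection carried out explicitly, with the constant term and the relation $K_n=\kappa_{n+1}/(n+1)$ handled correctly.
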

The numbers $a_0,a_1,\ldots $ are listed as sequence A006252 in the
On-Line Encyclopedia of Integer Sequences~\cite{OEIS}. The first $11$
entries are positive, then $a_{12}=-519312$ is negative, the subsequent
entries seem to have alternating signs. The conjecture that this
alternation continues indefinitely, may be rephrased as follows.
\begin{conjecture}
\label{C_novice}
For $n\geq 12$ we have $n\cdot\kappa_{n-1}> \kappa_n$.  
Equivalently, $n\cdot K_{n-1}>K_n$ holds for $n\geq 11$.
\end{conjecture}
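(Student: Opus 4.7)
My plan is to combine a Hankel contour representation of the generating function coefficients with an effective asymptotic bound and a finite numerical check. First reformulate: let $\tilde f(v) := 1/(1-\log(1-v))$ and write $\tilde f(v) = \sum_{n\ge 0} b_n v^n/n!$. Since $\tilde f(v) = f(-v)$ for the generating function $f(t) = \sum_n a_n t^n/n! = 1/(1-\log(1+t))$ from the paper, one has $b_n = (-1)^n a_n$, and the conjecture $n K_{n-1} > K_n$ for $n \ge 11$ is equivalent to $b_n < 0$ for $n \ge 11$.

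The function $\tilde f$ has only two singularities: a logarithmic branch point at $v = 1$ (with branch cut $[1,\infty)$) and a simple pole at $v = 1-e$, where $\tilde f$ has residue $e$. Deforming the Cauchy contour $\{|v|=r\}$, $0<r<1$, outward past both singularities and computing the discontinuity of $\tilde f$ across the branch cut yields the identity
\begin{equation*}
b_n = -\,n!\,I_n + (-1)^n\,\frac{e \cdot n!}{(e-1)^{n+1}}, \qquad I_n := \int_1^{\infty}\frac{dv}{\bigl[(1-\log(v-1))^2 + \pi^2\bigr]\,v^{n+1}} > 0.
\end{equation*}
For odd $n$, both terms on the right are negative, so $b_n < 0$ is automatic; this settles every odd $n \ge 11$. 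For even $n$ one must show that the polynomially decaying branch-cut term dominates the exponentially small residue correction, i.e., that $I_n > e/(e-1)^{n+1}$.

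To bound $I_n$ from below, substitute $v = 1 + e^{-s}$ to rewrite $I_n = \int_{-\infty}^{\infty} e^{-s}/\bigl[((1+s)^2 + \pi^2)(1+e^{-s})^{n+1}\bigr]\, ds$, and restrict to a window such as $s \in [\log n, 2\log n]$, on which $(1+e^{-s})^{n+1}$ is bounded by a universal constant and $(1+s)^2 + \pi^2 = O(\log^2 n)$; this gives an explicit lower bound of the form $I_n \ge c_1/(n\log^2 n)$ with a concrete $c_1 > 0$, and hence an explicit threshold $N_0$ such that $I_n > e/(e-1)^{n+1}$ for all even $n \ge N_0$. For the remaining even values $12 \le n < N_0$, one computes $a_n$ directly from the quadratic recurrence
\begin{equation*}
a_n = \sum_{k=0}^{n-1}\binom{n-1}{k} a_k a_{n-1-k} - (n-1)\,a_{n-1},
\end{equation*}
derived from the ODE $(1+t) f'(t) = f(t)^2$ satisfied by $f$, and verifies $b_n < 0$ case by case.

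The main obstacle is keeping $N_0$ small enough that the finite verification remains practical. For moderate $n$ like $12$ the inequality $I_n > e/(e-1)^{n+1}$ is not far from tight, so tightening the explicit constant $c_1$ may require retaining a further term of the singular expansion $\tilde f(v) = 1/\log(1/(1-v)) + O(1/\log^2(1/(1-v)))$ near $v=1$; without such care, the crude bound could push $N_0$ into a range where the direct calculation of the $a_n$, while still in principle elementary via the quadratic recurrence, becomes cumbersome.
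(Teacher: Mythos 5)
First, note that the paper contains no proof of this statement: it is stated as a conjecture and explicitly left open (the discussion at the end of Section~6 even explains the obstruction, namely that the integrand $x(x-1)\cdots(x-n+1)e^{-x}$ changes sign on $(0,\infty)$, so Jordan's mean-value argument for the Bernoulli numbers of the second kind does not transfer). There is therefore no proof in the paper to compare yours against; you are attempting something the author did not do.

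That said, your reduction is essentially correct and, in my view, the right attack. I checked the central identity: with the principal branch, the jump of $\tilde f(v)=1/(1-\log(1-v))$ across the cut $[1,\infty)$ is $\tilde f(v+i0)-\tilde f(v-i0)=-2\pi i/\bigl[(1-\log(v-1))^2+\pi^2\bigr]$, the only singularity off the cut is the simple pole at $v=1-e$ with residue $e$, and the outer circle contributes nothing since $\tilde f(v)=O(1/\log|v|)$; this yields exactly your $b_n=-n!\,I_n+(-1)^n e\,n!/(e-1)^{n+1}$ with $b_n=(-1)^na_n=K_n-nK_{n-1}$, and the odd case (including $n=11$) is then immediate. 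What you have, however, is a plan rather than a proof, and the gap is that none of the quantitative work is executed. (i) You never produce the constant $c_1$ or the threshold $N_0$: your window $s\in[\log n,2\log n]$ does give, e.g., $I_n\ge 1/\bigl(8n((1+2\log n)^2+\pi^2)\bigr)$ since $(1+e^{-s})^{n+1}\le 4$ and $\int_{\log n}^{2\log n}e^{-s}\,ds\ge 1/(2n)$ there, and this beats $e/(e-1)^{n+1}$ only from roughly $n\ge 18$ on, so the residual finite check is just the even values $n\in\{12,14,16\}$ --- entirely practical, but it must actually be carried out, including the monotonicity argument showing the inequality persists for all $n\ge N_0$. (ii) The case-by-case verification via the quadratic recurrence (which I confirm follows from $(1+t)f'=f^2$) is asserted, not done; note $a_{12}=-519312<0$ is already recorded in the paper, which disposes of $n=12$. (iii) Housekeeping items --- that $1-\log(1-v)$ has no zeros on the principal sheet other than $v=1-e$, that the denominator of $I_n$ never vanishes, and that the branch of $\log$ is tracked consistently so the cut term really enters with a minus sign --- must be written down, since a sign error there would reverse the conclusion. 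None of these is a conceptual obstacle; filled in, your argument would upgrade the paper's conjecture to a theorem.
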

We may call Conjecture~\ref{C_novice} the {\em novice's chance}. Imagine
that the first player asks a novice friend to replace him or her for
just the first move in a flat Bernoulli game starting from a random
position of rank $n\geq 12$. If Conjecture~\ref{C_novice} is correct
then novice could simply remove the last letter,
because the number of nonkernel positions in which this is the first
move of the first player's winning strategy still exceeds the
number of all kernel positions. We should note that for the original
Bernoulli game a novice has no such chance. In that game the removal of
a single letter at the end of both words is not always a valid move, 
but we could advise our novice to remove the last letters at the end
of both words if this is a valid move and make a random valid move
otherwise. Our novice would have a chance if 
$$
\kappa_{n-1}\cdot \left(n^2-\binom{n-1}{2}\right)
=\kappa_{n-1}\cdot \binom{n+1}{2}\geq \kappa_n
$$
was true for all large $n$. However, it is known~\cite[\S 93]{Jordan} that
we have 
\begin{equation}
\label{E_b2bound}
\frac{n-2}{n}\left|\frac{b_{n-1}}{(n-1)!}\right|
<\left|\frac{b_{n}}{n!}\right|
<
\frac{n-1}{n}\left|\frac{b_{n-1}}{(n-1)!}\right|,\quad\mbox{implying}
\end{equation}
$$
(n-2)(n+1)\kappa_{n-1}<\kappa_n<(n-1)(n+1)\kappa_{n-1}.
$$
On the page of A006252 in~\cite{OEIS} we find that the
coefficient of $t^{n}/n!$ in $1/(1-\ln(1+t))$ is
\begin{equation}
\label{E_novicest}
\frac{(-1)^{n}(\kappa_{n+1}-(n+1)\cdot \kappa_{n})}{n+1}
=(-1)^{n}\left(K_n-n\cdot K_{n-1}\right)=\sum_{k=0}^{n} s(n,k) k!
\end{equation}
Equivalently, 
\begin{equation}
\label{E_noviceint}
\frac{(-1)^{n}(\kappa_{n+1}-(n+1)\cdot \kappa_{n})}{n+1}=
(-1)^{n}\left(K_n-n\cdot K_{n-1}\right)=\int_0^{\infty}
x(x-1)\cdots (x-n+1) e^{-x}\ dx.
\end{equation}
Equations (\ref{E_novicest}) and (\ref{E_noviceint}) may be verified the
same way as the analogous formulas (\ref{E_st}) and (\ref{E_Kint}). 
Therefore we may rewrite Conjecture~\ref{C_novice} as
follows:
\begin{equation}
\label{E_novice}
(-1)^{n}\int_0^{\infty} x(x-1)\cdots(x-n+1) e^{-x}\ dx
>0\quad\mbox{holds for $n\geq 11$.}
\end{equation}
This form indicates well the complication that arises, compared to the
original Bernoulli game. To prove (\ref{E_b2bound}),
Jordan~\cite[\S 93]{Jordan} uses the formula 
$$
\frac{b_n}{n!}=\int_0^1 \binom{x}{n} \ dx
$$
and is able to use the mean value theorem to compare $b_n/n!$ with
$b_{n+1}/(n+1)!$, because the function 
$\binom{x}{n}$ does not change sign on the interval $(0,1)$. Proving Equation 
(\ref{E_novice}) is equivalent to a similar estimate of the change of
the integral $(-1)^n\int_0^{\infty} (x-1)\cdots(x-n) e^{-x}\ dx$ as we
increase $n$, however, this integrand does change the sign several times
on the interval $(0,\infty)$. 

\section{Concluding remarks}
\label{s_c}

Conjecture~\ref{C_novice}, if true, would be an intriguing
example of a sequence ``finding its correct signature pattern'' after a
relatively long ``exceptional initial segment''. Many such examples seem
to exist in analysis, and it is perhaps time for combinatorialists to start
developing a method of proving some of them. 

Some of the most interesting questions
arising in connection with this paper seem to be related to the
instant Bernoulli game, presented in Section~\ref{s_MR}. The fact
that our decomposition into elementary kernel factors is
bijectively equivalent to King's~\cite{King} construction 
raises the suspicion that this decomposition may
also have an algebraic importance beyond the combinatorial one. This
suspicion is underscored by the fact that the correspondence between our
decomposition and King's is via some modified inversion table, whereas
Aguiar and Sottile~\cite{Aguiar-Sottile} highlight the importance of the weak
order to the structure of the Malvenuto-Reutenauer Hopf algebra, first
pointed out by Loday and Ronco~\cite{Loday-Ronco}. The weak order is
based on comparing the sets of inversions of two
permutations. Depending the way we choose the basis of the self-dual
Malvenuto-Reutenauer Hopf algebra, expressing one of the product and
coproduct seems easy in terms of place-based non-inversion tables,
whereas the other seems very difficult. If we choose the representation
considered by Poirier and Reutenauer~\cite{Poirier-Reutenauer} where
connected permutations form the free algebra basis, then the product of
two permutations is easily expressed in terms of PNTs, thus the
elementary kernel factor decomposition might indicate the presence of a
larger algebra ``looming on the horizon'' in which the multiplicative
indecomposables of the Malvenuto-Reutenauer Hopf algebra become decomposable. 

We should also mention that the decomposition that is equivalent to the
removal of the last elementary kernel factor is only the first phase in
King's construction~\cite{King}, a lot of hard work is done
afterwards to find the transposition Gray code, while recursing
on these reduction steps. Our presentation allows to better visualize King's
entire ``rough'' decomposition ``at once'' and thus may be suitable to
attack the open question of finding an adjacent transposition Gray code.

Finally, the degenerate Bernoulli game indexed with
$(p,q)$~\cite[\S 6]{Hetyei-EKP} can also be shown to be isomorphic to a
strongly Bernoulli type truncation game. For this game, the number of kernel
positions of rank $n$ is $(-q)^n (n+1)!\beta_n(p/q,0)$~\cite[Thm.\
  6.2]{Hetyei-EKP}, where $\beta_n(p/q)$ is a degenerate Bernoulli
number. We leave the detailed analysis of this game 
to a future occasion.

\section*{Acknowledgements} This work was completed while the author was
on reassignment of duties sponsored by the University of North Carolina
at Charlotte. The author wishes to thank two anonymous referees for
helping substantially improve both the presentation and the contents of
this paper and Christian Krattenthaler for remembering the
exercise in Stanley's book~\cite[Ch.\ 1, Exercise 32b]{Stanley-EC1} on
strong fixed points.

\end{document}